\numberwithin{equation}{section}
\theoremstyle{plain}
\newtheorem*{theorem*}{Main Theorem}					
\newtheorem{theorem}{Theorem}[section]	
\newtheorem{proposition}[theorem]{Proposition}		
\newtheorem{corollary}[theorem]{Corollary}
\newtheorem{lemma}[theorem]{Lemma}
\theoremstyle{definition}
\newtheorem{definition}[theorem]{Definition}
\newtheorem{remark}[theorem]{Remark}
\renewcommand{\P}{\mathbb{P}}
\newcommand{\Ibold}{{\bf I}}
\newcommand{\CBbb}{\mathbb C}
\newcommand{\QBbb}{\mathbb Q}
\newcommand{\ZBbb}{\mathbb Z}
\newcommand{\Acal}{\mathcal A}
\newcommand{\Ccal}{\mathcal C}
\newcommand{\Ecal}{\mathcal E}
\newcommand{\Fcal}{\mathcal F}
\newcommand{\Hcal}{\mathcal H}
\newcommand{\Kcal}{\mathcal K}
\newcommand{\Lcal}{\mathcal L}
\newcommand{\Ocal}{\mathcal O}
\newcommand{\Qcal}{\mathcal Q}
\newcommand{\gfrak}{\mathfrak g}
\newcommand{\Fscr}{\mathscr F}
\DeclareMathOperator{\rank}{rank}
\DeclareMathOperator{\tr}{tr}
\DeclareMathOperator{\sing}{sing}
\DeclareMathOperator{\Gr}{Gr}
\DeclareMathOperator{\supp}{supp}
\DeclareMathOperator{\Quot}{Quot}
\newcommand{\dbar}{\bar\partial}
\newcommand{\lra}{\longrightarrow}
\newcommand{\ch}{{\rm ch}}
\newcommand{\HYM}{\text{\rm\tiny HYM}}
\numberwithin{equation}{section}
\begin{document}
\title[Yang-Mills flow for semistable bundles]{Continuity of the Yang-Mills flow on the set of semistable bundles}

\dedicatory{Dedicated to Duong H. Phong, with admiration, on the occasion of his 65th birthday.}

\author[Sibley]{Benjamin Sibley}
\address{Benjamin Sibley, Simons Center for Geometry and Physics \\
State University of New York \\
Stony Brook, NY 11794-3636,
USA}
\email{bsibley@scgp.stonybrook.edu}

\author[Wentworth]{Richard Wentworth}
\address{Richard Wentworth, Department of Mathematics \\ University of Maryland \\ College Park, MD
20742, USA}
\email{raw@umd.edu}
\urladdr{\href{http://www.math.umd.edu/~raw/}{http://www.math.umd.edu/~raw/}}

\thanks{
R.W.'s research supported in part by NSF grant DMS-1564373. 
 The authors also acknowledge support from NSF grants DMS-1107452, -1107263, -1107367 ``RNMS: GEometric structures And Representation varieties'' (the GEAR Network).}


\keywords{Yang-Mills flow, semistable bundles, Donaldson--Uhlenbeck compactification}
\subjclass[2010]{14D20, 14J60, 32G13, 53C07}

%
\maketitle

\noindent

\section{Introduction}
Let $(X,\omega)$ be a compact K\"ahler manifold of dimension $n$ and $(E,h)\to X$  a $C^\infty$ hermitian vector bundle on $X$. The celebrated theorem of Donaldson-Uhlenbeck-Yau states that if $A$ is an integrable unitary connection  on $(E,h)$ that induces an $\omega$-slope stable holomorphic structure on $E$, then there is a complex gauge transformation $g$ such that  $g(A)$ satisfies the Hermitian-Yang-Mills (HYM) equations. 
The proof in \cite{UhlenbeckYau:86} uses the continuity method applied to a deformation of the  Hermitian-Einstein equations for the metric $h$. The approach in \cite{Donaldson:85,Donaldson:87} deforms the metric using a nonlinear parabolic
 equation, the \emph{Donaldson flow}. Deforming the metric is equivalent to acting by a complex gauge transformation modulo unitary ones, and in this context the Donaldson flow  is equivalent (up to unitary  gauge transformations) to the Yang-Mills flow on the space of integrable unitary connections. The proof in \cite{Donaldson:87}  assumes  that $X$ is a projective algebraic manifold (more precisely, that $\omega$ is a Hodge metric)
 whereas the argument in \cite{UhlenbeckYau:86} does not. The methods of Uhlenbeck-Yau and Donaldson were combined by Simpson \cite{Simpson:88} to prove convergence of the Yang-Mills flow for stable bundles on all compact K\"ahler manifolds. 
The Yang-Mills flow thus defines a map $\Acal^s(E,h)\to M_{\HYM}^\ast(E,h)$ from the space of smooth integrable connections on $(E,h)$ inducing stable holomorphic structures to the moduli space $M^\ast_{\HYM}(E,h)$ of irreducible HYM connections.\footnote{The notion of (semi)stability depends on the choice of K\"ahler class $[\omega]$; however, the class will remain fixed throughout, and we shall suppress this dependency from the notation.} Continuity of this map follows by a comparison of Kuranishi slices (see \cite{FujikiSchumacher:87,Miyajima:89}).

When the holomorphic bundle $\Ecal=(E,\dbar_A)$ is  strictly semistable, then the Donaldson flow fails to  converge unless $\Ecal$ splits holomorphically into a sum of stable bundles (i.e.\ it is  \emph{polystable}). If $n=1$ it is still true, however,  that the Yang-Mills flow converges to a smooth HYM connection on $E$ for any semistable initial condition.
This was proven by Daskalopoulos and R{\aa}de \cite{Daskal:92,Rade:92}. 
Moreover, the holomorphic structure of the limiting connection is isomorphic to the polystable holomorphic bundle $\Gr(\Ecal)$ obtained from  the associated gradation of the Jordan-H\"older filtration of $\Ecal$.
For $n\geq 2$,
there is an obstruction to a smooth splitting into an associated graded bundle, and $\Gr(\Ecal)$ may not be locally free. The new phenomenon of bubbling occurs, and one must talk of convergence \emph{in the sense of Uhlenbeck}, that is, away from a singular set of complex codimension at least $2$ (see Theorem \ref{thm:uhlenbeck} below).
 In \cite{DaskalWentworth:04} (see also \cite{DaskalWentworth:07b}) it was shown for $n=2$ that the Yang-Mills flow converges in the sense of Uhlenbeck to the reflexification $\Gr(\Ecal)^{\ast\ast}$, which is a polystable bundle.  The bubbling locus, which in this case is a collection of points with multiplicities,  is precisely the set where $\Gr(\Ecal)$ fails to be locally free \cite{DaskalWentworth:07a}. The extension of these results in higher dimensions was achieved in \cite{Sibley:15,SibleyWentworth:15}. Here, even the reflexified associated graded sheaf may fail to be locally free, and one must use the notion of an \emph{admissible} HYM connection introduced by Bando and Siu \cite{BandoSiu:94}.  Convergence of the flow to the associated graded sheaf for semistable bundles in higher dimensions was independently proven by Jacob \cite{Jacob:15}.

In a different direction, a compactification of $M_{\HYM}^\ast$
 was proposed by Tian in \cite{Tian:00} and further studied 
 in \cite{TianYang:02}. This may be viewed as a higher dimensional version of the Donaldson-Uhlenbeck compactification of ASD connections on a smooth manifold of real dimension $4$ (cf.\ \cite{FreedUhlenbeck:84,DonaldsonKronheimer:90}).
It is based on a finer analysis of the bubbling locus for limits of HYM connections that is similar to the one carried out for harmonic maps by Fang-Hua Lin \cite{Lin:99}. More precisely, Tian proves that the top dimensional stratum is rectifiable and calibrated by $\omega$ with integer multiplicities, and as a consequence of results of King \cite{King:71} and Harvey-Shiffman \cite{HarveyShiffman:74}, it represents an analytic cycle.
 The compactification  is  then defined by adding ideal points containing in addition to an admissible HYM connection the data of a codimension $2$ cycle in an appropriate cohomology class (see Section \ref{sec:uhlenbeck}). At least when $X$ is projective, 
 the space $\widehat M_{\HYM}$ of ideal HYM connections is a compact topological space (Hausdorff), and the compactification of $M^\ast_{\HYM}$ is obtained by taking its closure $\overline M_{\HYM}\subset \widehat M_{\HYM}$. 
Under this assumption, we recently showed,
    in collaboration with Daniel Greb and Matei Toma, that $\overline M_{\HYM}$ admits the structure of a seminormal complex algebraic space \cite{GSTW:18}.

The purpose of this note is to point out  the compatibility of this construction with the Yang-Mills flow. For example, in the case of a Riemann surface, the flow defines a continuous deformation retraction of the entire semistable stratum  onto the moduli space of semistable bundles.  This is precisely what is to be expected from Morse theory (see \cite{AtiyahBott:82}).  In higher dimensions, as mentioned above, bubbling along the flow needs to be accounted for. 
The  result is the following.

\begin{theorem*} 
Let $(E,h)$ be a hermitian vector bundle over a compact K\"ahler manifold 
$(X,\omega)$ with $[\omega]\in H^2(X,\ZBbb)$. Let $\Acal^{ss}(E,h)$ be the set of semistable integrable unitary connections on $(E,h)$ with the smooth topology (see Section \ref{sec:uhlenbeck}).
 Then the Yang-Mills flow defines a continuous map 
 \begin{equation}\label{eqn:F}
 \Fscr: \Acal^{ss}(E,h)\to \widehat M_{\HYM}(E,h)\ .
 \end{equation}
  In particular, the restriction of $\Fscr$ gives a continuous map $\overline{\Acal^s}(E,h)\to \overline M_{\HYM}(E,h)$, where \break $\overline{\Acal^s}(E,h)\subset \Acal^{ss}(E,h)$ is the closure of $\Acal^s(E,h)$ in the smooth topology.
\end{theorem*}

The proof of the Main Theorem is a consequence of the work in \cite{GSTW:18}, with small modifications.
For the case of K\"ahler surfaces, 
this result was claimed in \cite[Thm.\ 2]{DaskalWentworth:07a}. Unfortunately, there is an error in the proof of Lemma 8 of that paper, and hence also in the proof of Theorem 2. The Main Theorem above  validates the statement in \cite[Thm.\ 2]{DaskalWentworth:07a}, at least in the projective case. We do not know if the result holds when $X$ is only K\"ahler.  The advantage of projectivity is that  a twist of the bundle is generated by global  holomorphic sections.  These behave well with respect to Uhlenbeck limits and provide a link between the algebraic geometry of geometric invariant theory quotients and the analytic compactification. We review this in Section \ref{sec:quot} below.

\section{Uhlenbeck limits and admissible HYM connections} \label{sec:uhlenbeck}
In this section we briefly review the compactification of $M_{\HYM}^\ast(E,h)$ by ideal HYM connections.
As in the introduction, let
  $(E,h)$ be a hermitian vector bundle on  a compact K\"{a}hler manifold $(X, \omega)$ of dimension $n$, and let $\gfrak_E$ denote the bundle of skew-hermitian endomorphisms of $E$. The space $\Acal(E,h)$ of $C^\infty$ unitary connections on $E$ is an affine space over $\Omega^1(X,\gfrak_E)$, and we endow it with the smooth topology.
  A connection $A\in\Acal(E,h)$ is called
   \emph{integrable} if its curvature form $F_A$ is of type (1,1).
Let $\Acal^{1,1}(E,h)$ denote the set of  integrable unitary connections on $(E,h)$. 
Then  $\Acal^{1,1}(E,h)\subset\Acal(E,h)$ inherits a  topology as a closed subset.
The locus $\Acal^s(E,h)$  of \emph{stable} holomorphic structures is open in $\Acal^{1,1}(E,h)$ (cf.\ \cite[Thm.\ 5.1.1]{LubkeTeleman:95}). Under the assumption that $\omega$ is a Hodge metric we shall prove below that
 the subset $\Acal^{ss}(E,h)$ of \emph{semistable} holomorphic structures is also open in $\Acal^{1,1}(E,h)$ (see Corollary \ref{cor:open}).

   We call the contraction $\sqrt{-1}\Lambda F_A$ of $F_A$ with the K\"ahler metric the \emph{Hermitian-Einstein tensor}.  It is a hermitian endomorphism of $E$.
The  key definition is the following (cf.\ \cite{BandoSiu:94} and \cite[Sect.\ 2.3]{Tian:00}).
\begin{definition}\label{def:admissible}
   An \emph{admissible connection} is a pair $(A,S)$ where 
 \begin{enumerate}
 \item  $S\subset X$ is a closed subset of finite Hausdorff $(2n-4)$-measure;
 \item $A$ is a smooth integrable unitary connection on $E\bigr|_{X\backslash S}$;
 \item $\int_{X\backslash S} |F_A|^2\, dvol_X < +\infty$;
 \item $\sup_{X\backslash S}  | \Lambda F_A| < +\infty$.
 \end{enumerate}
 An admissible connection is called \emph{admissible HYM} if there is a constant $\mu$ such that  $\sqrt{-1}\Lambda F_A=\mu\cdot \Ibold$ on $X\backslash S$.
 \end{definition}

The fundamental weak compactness result is the following.

\begin{theorem}[Uhlenbeck \cite{UhlenbeckPreprint}] \label{thm:uhlenbeck}
Let  $A_i$ be a  sequence of smooth integrable connections on $X$ with uniformly bounded Hermitian-Einstein tensors.
 Then for any $p>n$ there is
\begin{enumerate}
\item a subsequence $($still denoted $A_{i}$$)$,
\item a closed subset $S_{\infty}\subset X$ of  finite $(2n-4)$-Hausdorff measure,
\item a connection $A_\infty$ on a hermitian bundle $E_\infty\to X\backslash S_{\infty}$, and
\item local isometries $E_\infty \simeq E$ on compact subsets of $X\backslash S_{\infty}$
\end{enumerate}
such that with respect to the local isometries, and  modulo unitary gauge equivalence, $A_{i}\to A_\infty$ weakly in $
L^p_{1,loc}(X\backslash S_{\infty})$.
\end{theorem}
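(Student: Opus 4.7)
The plan is to prove weak compactness in four stages: derive a uniform $L^2$ bound on the curvatures $F_{A_i}$ from the $L^\infty$ bound on $\Lambda F_{A_i}$ via Chern--Weil; identify the candidate singular set $S_\infty$ as the concentration set of the $(2n-4)$-dimensional curvature density; use Uhlenbeck's Coulomb gauge lemma together with an $\varepsilon$-regularity estimate to produce local weak subsequential limits on balls disjoint from $S_\infty$; and finally patch these local gauges into a global hermitian bundle $E_\infty\to X\setminus S_\infty$ carrying the limit connection $A_\infty$ by a transition-function argument.

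For the first stage, decompose each $F_{A_i}$, as an $\End(E)$-valued $(1,1)$-form, into its trace part plus a primitive part and a multiple of $\omega\otimes \id$. Chern--Weil applied to $\tr(F_{A_i}\wedge F_{A_i})\wedge \omega^{n-2}$ and $(\tr F_{A_i})\wedge(\tr F_{A_i})\wedge \omega^{n-2}$ gives integrals depending only on the Chern numbers of $E$ and on $[\omega]$. On primitive $(1,1)$-forms the Hodge--Riemann bilinear relations yield $-\tr(\alpha\wedge\alpha)\wedge \omega^{n-2}/(n-2)!=c_n|\alpha|^2\,dvol_X$ for an explicit positive constant $c_n$, and primitivity makes the cross terms with $\omega$ vanish. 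Combined with the hypothesis $\sup_i\sup_X|\Lambda F_{A_i}|\le C$, this yields a uniform bound $\int_X|F_{A_i}|^2\,dvol_X\le C'$.

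Next, for $\varepsilon_0>0$ to be chosen via $\varepsilon$-regularity below, set
\begin{equation*}
S_\infty\;=\;\bigcap_{r>0}\Bigl\{x\in X\,:\,\liminf_{i\to\infty}r^{4-2n}\!\!\int_{B_r(x)}|F_{A_i}|^2\,dvol_X\;\ge\;\varepsilon_0\Bigr\}.
\end{equation*}
A Vitali covering argument combined with the uniform $L^2$ bound gives $\mathcal{H}^{2n-4}(S_\infty)\le C'/\varepsilon_0<\infty$. The key analytic input is an $\varepsilon$-regularity statement: there exist $\varepsilon_0, C_0>0$ (depending on the HYM-tensor bound and on the geometry of $X$) such that if $A$ is a smooth integrable unitary connection on a geodesic ball $B_{2r}\subset X$ with $\sup_{B_{2r}}|\Lambda F_A|\le C$ and $r^{4-2n}\int_{B_{2r}}|F_A|^2<\varepsilon_0$, then $\sup_{B_r}|F_A|\le C_0 r^{-2}$. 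This is established by Moser iteration applied to the Bochner-type inequality for $|F_A|^2$, in which the quadratic $F_A\# F_A$ term is absorbed using integrability together with the $\Lambda F_A$ bound. I expect this $\varepsilon$-regularity, essentially the K\"ahler analogue of Uhlenbeck's curvature estimate, to be the main technical obstacle.

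At any $x\in X\setminus S_\infty$, pick $r>0$ and pass to a subsequence so that $r^{4-2n}\int_{B_r(x)}|F_{A_i}|^2<\varepsilon_0$ for all large $i$; the $\varepsilon$-regularity then yields uniform pointwise bounds on $F_{A_i}$ over $B_{r/2}(x)$. Uhlenbeck's gauge lemma supplies unitary $u_i$ over $B_{r/2}(x)$ placing $u_i(A_i)$ in Coulomb gauge relative to a fixed reference connection, with uniform $L^p_1$ bounds for any $p<\infty$; by Banach--Alaoglu and Rellich there is a weak $L^p_1$-limit $A_\infty^\alpha$ on $B_{r/4}(x)$. Cover $X\setminus S_\infty$ by countably many such balls $B_\alpha$ and diagonalize. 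The transition cocycles $g_i^{\alpha\beta}=u_i^\alpha (u_i^\beta)^{-1}$ satisfy $d g_i^{\alpha\beta}=u_i^\alpha(A_i^\beta-A_i^\alpha)(u_i^\beta)^{-1}$, hence are uniformly bounded in $L^p_1$ on overlaps and, along a further subsequence, converge in $C^0\cap L^p_{1,\mathrm{loc}}$ to transition functions $g^{\alpha\beta}_\infty$. These glue the local frames into a hermitian bundle $E_\infty\to X\setminus S_\infty$ equipped with the claimed local isometries to $E$ and with the integrable unitary limit connection $A_\infty$ of class $L^p_{1,\mathrm{loc}}$, as required. The chief obstacles, beyond the $\varepsilon$-regularity, are verifying that the diagonalization-plus-cocycle step genuinely produces a bundle (not merely incoherent local data) and that weak $L^p_1$ convergence is preserved under the patching; both are handled in Uhlenbeck's original manuscript and Tian's \cite{Tian:00} exposition, whose arguments I would follow.
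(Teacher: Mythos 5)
The paper does not prove this statement; it is quoted verbatim from Uhlenbeck's unpublished preprint, so there is no internal argument to compare against. Your overall architecture --- Chern--Weil to get the uniform $L^2$ curvature bound, the concentration set $S_\infty$ with the Vitali covering estimate on $\mathcal{H}^{2n-4}(S_\infty)$, local gauge fixing off $S_\infty$, and patching of transition cocycles --- is the standard and correct skeleton, and your first two stages are fine as written.

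The genuine gap is the $\varepsilon$-regularity step, which you correctly flag as the crux but then resolve with an argument that fails. The claimed estimate $\sup_{B_r}|F_A|\le C_0 r^{-2}$ under $\sup|\Lambda F_A|\le C$ and small scaled energy is \emph{false} for general integrable connections: it holds for Yang--Mills (or HYM) connections because $d_A^*F_A=0$ closes the Bochner--Weitzenb\"ock system, whereas for a merely integrable connection the K\"ahler identities give $d_A^*F_A = i(\bar\partial_A-\partial_A)(\Lambda F_A)$, a first derivative of $\Lambda F_A$ that an $L^\infty$ bound does not control, so Moser iteration on $|F_A|^2$ does not start. A concrete counterexample already occurs for the trivial holomorphic line bundle on a ball in $\CBbb^n$ with metric $e^{-\varphi}$: then $F=\partial\bar\partial\varphi$ and $\Lambda F$ is essentially $\Delta\varphi$, and there are classical $\varphi$ with bounded Laplacian, arbitrarily small energy, and unbounded complex Hessian. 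This is precisely why the theorem asserts only weak $L^p_{1,loc}$ convergence for finite $p$ (contrast Proposition \ref{prop:admissible}, where the HYM or flow hypothesis restores smooth convergence away from $S_\infty$). Relatedly, your appeal to ``Uhlenbeck's gauge lemma'' is circular at this point: in real dimension $2n>4$ the Coulomb gauge theorem needs smallness of the critical norm $\Vert F\Vert_{L^n}$, which the scale-invariant $L^2$ smallness does not supply; producing uniform local Coulomb (or holomorphic) gauges from small energy plus bounded $\Lambda F_A$ alone is exactly the nontrivial content of Uhlenbeck's manuscript. The correct local mechanism is to work in such a gauge and treat the first-order elliptic system $d^*a=0$, $(da+a\wedge a)^{0,2}=0$, $\Lambda(da+a\wedge a)$ bounded (equivalently, the second-order quasilinear equation for the metric in a holomorphic frame), which yields $L^p_1$ bounds on the connection form without any pointwise curvature bound. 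Your final patching stage is fine once those local bounds are in hand.
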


We call the limiting connection $A_\infty$ an \emph{Uhlenbeck limit}.
 The set 
$$
S_\infty=
\bigcap_{\sigma_0\geq \sigma>0}\Bigl\{
	x\in X \mid \liminf_{i\to\infty} \sigma^{4-2n}\int_{B_\sigma(x)}|F_{A_i}|^2 \frac{\omega^n}{n!}\geq \varepsilon_0\Bigr\}\ ,
$$
where $\sigma_0$ and $\varepsilon_0$ are universal constants depending only on the geometry of $X$,
is called the 
\emph{(analytic) singular set}.

For the definition of a gauge theoretic compactification 
more structure is needed. This is provided
 by the following, which is a consequence of work of Tian \cite{Tian:00} and Hong-Tian \cite{HongTian:04}. 

\begin{proposition} \label{prop:admissible} 
The Uhlenbeck limit of a sequence of smooth HYM connections on $(E,h)$ is an admissible HYM connection. Moreover, the corresponding singular set $S_\infty$ is a holomorphic subvariety of codimension at least $2$. The same is true for Uhlenbeck limits of sequences along the Yang-Mills flow.
\end{proposition}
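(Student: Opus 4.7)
The plan is to verify the four conditions of Definition \ref{def:admissible}, then establish the limit HYM equation, and finally deduce the analytic structure of $S_\infty$; at the end I indicate the modifications needed for Yang--Mills flow limits. Conditions (i) and (ii) are essentially immediate from Theorem \ref{thm:uhlenbeck}: the singular set $S_\infty$ is defined as a curvature concentration set, hence is automatically closed with finite $(2n-4)$-Hausdorff measure, and $A_\infty$ is produced on $X\backslash S_\infty$ as the weak $L^p_{1,loc}$ limit. For condition (iii), each $A_i$ is HYM and integrable, so the standard Chern--Weil identity expresses $\int_X|F_{A_i}|^2\,\omega^n/n!$ as a topological constant determined by $c_1(E)^2$, $c_2(E)$, $[\omega]$, plus a multiple of $\|\Lambda F_{A_i}\|_{L^2}^2$, which is uniformly bounded since $\sqrt{-1}\Lambda F_{A_i}=\mu\cdot\Ibold$ for the fixed slope $\mu$. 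Lower semicontinuity under weak $L^2$ convergence on compact subsets of $X\backslash S_\infty$ then yields condition (iii) for $A_\infty$, and condition (iv) follows a posteriori from the limit HYM equation.

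To obtain that $A_\infty$ is itself HYM on $X\backslash S_\infty$, I would upgrade the weak $L^p_{1,loc}$ convergence to smooth convergence on compact subsets of $X\backslash S_\infty$. On small balls away from $S_\infty$, Uhlenbeck's $\varepsilon$-regularity allows each $A_i$ to be placed in Coulomb gauge relative to $A_\infty$; in this gauge the combination of $F_{A_i}^{0,2}=0$ and $\sqrt{-1}\Lambda F_{A_i}=\mu\cdot\Ibold$ becomes a uniformly elliptic system, standard elliptic bootstrap yields uniform $C^k$ bounds, and hence smooth convergence along a subsequence. Passing the HYM identity to this smooth limit gives $\sqrt{-1}\Lambda F_{A_\infty}=\mu\cdot\Ibold$ on $X\backslash S_\infty$, and in particular the sup bound on $\Lambda F_{A_\infty}$ required by condition (iv).

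The main obstacle is the assertion that $S_\infty$ is a holomorphic subvariety of complex codimension at least $2$; this is precisely the content of the analytic part of Tian's paper \cite{Tian:00}. The Yang--Mills monotonicity formula applied to each $A_i$, together with $\varepsilon$-regularity and the uniform bound on $\sqrt{-1}\Lambda F_{A_i}$, shows that $S_\infty$ is $(2n-4)$-rectifiable and coincides with the support of the defect measure capturing the curvature lost in the limit, and moreover that on the top-dimensional stratum the tangent cones are unions of complex $(n-2)$-planes with positive integer multiplicities. Packaging this defect as $\Theta\wedge\omega^{n-2}/(n-2)!$ for a positive rectifiable $(n-2,n-2)$-current $\Theta$ calibrated by $\omega^{n-2}/(n-2)!$, the theorems of King \cite{King:71} and Harvey--Shiffman \cite{HarveyShiffman:74} promote $\Theta$ to a holomorphic cycle whose support is the required analytic subvariety. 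This tangent cone analysis, the $\omega^{n-2}$ calibration, and the analyticity criterion of King and Harvey--Shiffman together constitute the genuinely hard analytical step.

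For the final assertion, the $L^2$ norm of $F_A$ is nonincreasing along the Yang--Mills flow, and the sup-norm of $\sqrt{-1}\Lambda F_A$ is nonincreasing by Donaldson's maximum principle, so the hypothesis of Theorem \ref{thm:uhlenbeck} is preserved along every flow line. The parabolic analogues of Tian's monotonicity formula and $\varepsilon$-regularity needed to repeat the arguments above were established in this setting by Hong--Tian \cite{HongTian:04}, and they allow the three preceding paragraphs to be carried out verbatim for flow limits, yielding admissibility, the limit HYM equation, and holomorphicity of $S_\infty$.
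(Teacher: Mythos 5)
The paper itself gives no argument for this proposition: it is recorded as a consequence of Tian \cite{Tian:00} and Hong--Tian \cite{HongTian:04}, and your outline follows essentially that same route (the Hermitian--Yang--Mills energy identity for the uniform $L^2$ curvature bound, Coulomb gauge plus elliptic bootstrap for local smooth convergence and the limit HYM equation, Tian's monotonicity/rectifiability/calibration analysis with King \cite{King:71} and Harvey--Shiffman \cite{HarveyShiffman:74}, and Hong--Tian's parabolic analogues for the flow). Those parts of your sketch are sound.

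There is, however, one genuine gap in the analyticity claim for $S_\infty$. You assert that $S_\infty$ coincides with the support of the defect measure, and your calibration/King--Harvey--Shiffman argument establishes analyticity only of that set, i.e.\ of the pure codimension-$2$ blow-up locus $|\Ccal_\infty|$. In general $S_\infty=|\Ccal_\infty|\cup S(A_\infty)$, where $S(A_\infty)$ is the density set \eqref{eqn:singset} of the limiting connection itself; points of $S(A_\infty)$ need carry no defect measure, since the curvature concentration there comes from $|F_{A_\infty}|^2$ rather than from energy lost in the limit. Analyticity of this piece does not follow from the rectifiability/calibration analysis: it requires the Bando--Siu extension theorem \cite{BandoSiu:94}, which extends $(E|_{X\setminus S_\infty},\dbar_{A_\infty})$ to a reflexive sheaf $\Ecal_\infty$ on all of $X$, together with the identification $S(A_\infty)=\sing(\Ecal_\infty)$ of Tian--Yang \cite[Thm.\ 1.4]{TianYang:02}, giving that $S(A_\infty)$ is a holomorphic subvariety of codimension at least $3$. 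Without this step the statement that the full $S_\infty$ is a holomorphic subvariety is not proved. A smaller point: for flow limits the argument is not quite ``verbatim,'' since the connections $A_{t_i}$ are not themselves HYM; to see that the limit satisfies the HYM equation one also needs that the Yang--Mills gradient $d_{A_t}^{*}F_{A_t}$ tends to zero in $L^2$ along a subsequence (from the decrease of the Yang--Mills functional), combined with the parabolic $\varepsilon$-regularity of \cite{HongTian:04} --- which is precisely where that reference carries the weight.
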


To be more precise, there is a decomposition $S_\infty=|\Ccal_\infty|\cup S(A_\infty)$, where
\begin{equation}\label{eqn:singset}
S(A_\infty):=\biggl\{ x\in X \,\biggr |\, \lim_{\sigma\downarrow
0}\sigma^{4-2n}\int_{B_{\sigma}(x)}\left\vert F_{A_\infty}\right\vert
^{2}\frac{\omega^n}{n!}\neq 0\ \biggr\}.
\end{equation}
has codimension $\geq 3$, and $|\Ccal_\infty|$ is the support of a codimension 2 cycle $\Ccal_\infty$. The cycle appears as the limiting current of the Yang-Mills energy densities, just as in the classical approach of Donaldson-Uhlenbeck in real dimension $4$. This structure motivates the following

\begin{definition}[{\cite[Def.\ 3.15]{GSTW:18}}] \label{def:ideal-connection}
An {\em ideal HYM connection}  is a triple  $(A,\Ccal, S(A))$ satisfying the following conditions:
\begin{enumerate}
\item $\Ccal$ is an $(n-2)$-cycle on $X$;
\item the pair $(A, |\Ccal| \cup S(A))$ is an  admissible HYM connection on the hermitian vector bundle $(E,h)\to X$, where $S(A)$ is 
given as in eq.\ \eqref{eqn:singset};
\item $[\ch_2(A)]=\ch_2(E)+[\Ccal]$, in $H^4(X,\QBbb)$;
\end{enumerate} 
\end{definition}

Here we have denoted by $\ch_{2}(A)$ the $(2,2)$-current given by%
\begin{equation*}
\ch_{2}(A)(\Omega):=-\frac{1}{8\pi^2}\int_{X}\tr(F_{A}\wedge F_{A})\wedge\Omega\ ,
\end{equation*}%
for smooth $(2n-4)$-forms $\Omega$.
This is well defined by Definition \ref{def:admissible} (3), and in
 \cite[Prop.\ 2.3.1]{Tian:00} it is shown to be a closed current. It thus
  defines a cohomology class as above. By  \cite{BandoSiu:94}, there is a polystable reflexive
sheaf $\mathcal{E}$ extending the holomorphic bundle $(E|_{X\backslash Z\cup
S(A)},\overline{\partial }_{A})$. The singular set $\sing(\Ecal)$ of $\Ecal$, that is, the locus where $\Ecal$ fails to be locally free, coincides with $S(A)$ (see \cite[Thm.\ 1.4]{TianYang:02}). By the proof of \cite[Prop.\ 3.3]{SibleyWentworth:15}, $\ch_{2}(A)$ represents the
class $\ch_{2}(\mathcal{E})$. Thus we may alternatively regard an ideal connection as a
pair $(\mathcal{E},\mathcal{C})$, where $\Ecal$ is a reflexive sheaf, $\Ccal$ is a codimension $2$ cycle with
$
\ch_{2}(\mathcal{E})=\ch_{2}(E)+[\mathcal{C}]
$, and where the underlying smooth bundle of $\Ecal$ on the complement of $|\Ccal|\cup \sing(\Ecal)$ is isomorphic to $E$.
See \cite[Sec.\ 3.3]{GSTW:18} for more details. 

There is an obvious notion of gauge equivalence of ideal HYM connections. The main result is the following.

\begin{theorem} \label{thm:ideal-convergence}
Assume $\omega$ is a Hodge metric.
Let $(A_i, \Ccal_i, S(A_i))\in \widehat M_\HYM(E,h)$.
Then there is a subsequence $($also denoted by $\{i\}$$)$, and an ideal HYM connection $(A_\infty, \Ccal_\infty, S(A_\infty))$
such that $\Ccal_i$ converges to a subcycle of $\Ccal_\infty$, and (up to gauge transformations) $A_i\to A_\infty$ in $C^\infty_{loc}$ on $X\backslash (|\Ccal_\infty|\cup S(A_\infty))$. Moreover, 
\begin{equation} \label{eqn:currents-converge}
\ch_2(A_i)-\Ccal_i\lra \ch_2(A_\infty)-\Ccal_\infty
\end{equation}
in the mass norm; in particular, also
in the sense of currents.
\end{theorem}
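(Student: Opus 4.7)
The plan is to combine Uhlenbeck compactness for admissible HYM connections with compactness for analytic cycles; the argument is essentially that of \cite{GSTW:18}. First I would establish uniform a priori bounds. Each $A_i$ is HYM, so the Hermitian--Einstein constants $\mu_i$ are determined by the fixed topological data and are therefore bounded. Using the Bando--Siu extension $\Ecal_i$ of $A_i$ across $|\Ccal_i|\cup S(A_i)$ together with the Bogomolov--Gieseker inequality for the polystable sheaf $\Ecal_i$, and the identity $[\ch_2(A_i)]=\ch_2(E)+[\Ccal_i]$ combined with the fact that $\Ccal_i$ is effective (calibrated by $\omega$ in Tian's sense), I would derive a uniform bound on the mass of $\Ccal_i$ and on $\int_{X\setminus(|\Ccal_i|\cup S(A_i))}|F_{A_i}|^2\,\dvol_X$.

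Next I would extract three simultaneously convergent subsequences. Uhlenbeck compactness (Theorem \ref{thm:uhlenbeck}, in the version for admissible HYM connections due to Tian and Hong--Tian) yields an admissible HYM connection $A_\infty$ on $X\setminus S_\infty$; Proposition \ref{prop:admissible} then gives the decomposition $S_\infty=|\Ccal_\infty|\cup S(A_\infty)$ with $\Ccal_\infty$ an analytic codimension-$2$ cycle. King's compactness theorem \cite{King:71} applied to the uniformly bounded cycles $\Ccal_i$ produces a limit cycle $\Ccal_\infty'$ in the sense of currents. Standard gauge fixing and elliptic regularity for the HYM equation promote the weak $L^p_1$ convergence of the $A_i$ to $C^\infty_{loc}$ convergence on $X\setminus(|\Ccal_\infty|\cup|\Ccal_\infty'|\cup S(A_\infty))$.

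The crux is to identify $\Ccal_\infty'$ as a subcycle of $\Ccal_\infty$ and to prove the mass-norm statement \eqref{eqn:currents-converge}. Note that the currents $T_i:=\ch_2(A_i)-\Ccal_i$ all represent the fixed class $\ch_2(E)\in H^4(X,\QBbb)$. On any relatively compact open set disjoint from $|\Ccal_\infty|\cup|\Ccal_\infty'|\cup S(A_\infty)$, the $C^\infty_{loc}$ convergence of the $A_i$ yields convergence of $\ch_2(A_i)$ to $\ch_2(A_\infty)$ in mass norm. Near $|\Ccal_\infty|\cup S(A_\infty)$, Tian's blow-up analysis and the monotonicity formula identify the bubbling contribution as precisely the difference $\Ccal_\infty-\Ccal_\infty'$ at the level of energy densities; this is where the Hodge assumption becomes essential, as the projective setup of \cite{GSTW:18} provides the cycle-theoretic rigidity (via Chow-scheme compactness and the ample-twist techniques reviewed in Section \ref{sec:quot}) needed to upgrade weak current convergence to mass-norm convergence.

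The main obstacle is exactly this last upgrade: a priori $T_i\to T_\infty$ only weakly, and cancellations between the bubbling portion of $\ch_2(A_i)$ and the cycles $\Ccal_i$ must be controlled quantitatively rather than merely cohomologically. Here one leans on the fact that the bubble cycle $\Ccal_\infty-\Ccal_\infty'$ is calibrated by $\omega$, so its mass equals the pairing of its class with $[\omega]^{n-2}/(n-2)!$, together with semi-continuity of this pairing along the sequence. Once mass-norm convergence is established, the weak convergence of $\Ccal_i$ to a subcycle of $\Ccal_\infty$ follows by comparing with the limit of $\ch_2(A_i)$.
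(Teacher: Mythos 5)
You should first note that the paper does not actually prove Theorem \ref{thm:ideal-convergence}; it is quoted from \cite{Tian:00,TianYang:02} and, in the form stated, from the compactness results of \cite{GSTW:18}. Measured against that cited argument, your outline follows essentially the same route: uniform bounds on cycle masses and Yang--Mills energy via the Bando--Siu polystable extension, the Bogomolov inequality, the identity $[\ch_2(A_i)]=\ch_2(E)+[\Ccal_i]$ and Chern--Weil; then simultaneous extraction of an Uhlenbeck limit and a cycle limit; then identification of the energy-defect as a calibrated integral cycle via Tian's structure theorem, with $\Ccal_\infty$ the sum of the limit of the $\Ccal_i$ and the bubbling cycle. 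Two attributions should be repaired: compactness of the space of effective cycles of bounded mass is Bishop/Barlet--Lieberman (Chow-variety compactness in the projective case), not \cite{King:71}, which gives the cycle structure of the limit current; and the Uhlenbeck compactness you need is for sequences of \emph{admissible} connections, each singular along its own set $|\Ccal_i|\cup S(A_i)$ --- this is not covered by Theorem \ref{thm:uhlenbeck} or by \cite{HongTian:04} (which concerns the flow), and is precisely one of the technical extensions carried out in \cite{GSTW:18} using Tian's monotonicity formula for admissible HYM connections. You also do not verify that the limit triple satisfies conditions (2)--(3) of Definition \ref{def:ideal-connection} (in particular the cohomological identity for $[\ch_2(A_\infty)]$ and the description \eqref{eqn:singset} of $S(A_\infty)$), though this follows from the same bookkeeping.

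The genuine gap is in your final ``upgrade to mass-norm convergence.'' What your calibration/semicontinuity argument actually delivers is weak-$\ast$ convergence of the currents together with convergence of their total masses: since the defect is a calibrated cycle, its mass is the pairing of a cohomology class with $[\omega]^{n-2}/(n-2)!$, and the class is pinned down by $[\ch_2(A_i)]=\ch_2(E)+[\Ccal_i]$, so no mass escapes into the lower strata. That is the correct content of \eqref{eqn:currents-converge} and is what the cited proof establishes. It is \emph{not} the same as convergence of $\ch_2(A_i)-\Ccal_i$ to $\ch_2(A_\infty)-\Ccal_\infty$ in the mass norm of the difference: the absolutely continuous parts $\ch_2(A_i)$ and the rectifiable parts $\Ccal_i$ are mutually singular, so mass-norm convergence of the difference would force the cycles themselves to converge in mass norm, which fails whenever the cycle parts genuinely move (e.g.\ ideal connections $(\theta,k\,p_i)$ on a surface with $p_i\to p$, $p_i\neq p$: the mass of $k\delta_{p_i}-k\delta_p$ is $2k$ for every $i$). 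So either you should interpret the conclusion as weak convergence plus convergence of masses --- in which case your argument is the right one --- or you must abandon the plan of obtaining the literal mass-norm statement by calibration and semicontinuity, since no such quantitative cancellation between the bubbling part of $\ch_2(A_i)$ and the moving cycles $\Ccal_i$ is available.
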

For more details we refer to \cite{Tian:00,TianYang:02,GSTW:18}.

\section{The method of holomorphic sections} \label{sec:quot}
Admissibility of a connection is precisely the correct analytic notion to make contact with complex analysis. Bando \cite{Bando:91} and Bando-Siu \cite{BandoSiu:94} show that bundles with admissible connections admit sufficiently many local holomorphic sections to prove coherence of the sheaf of $L^2$-holomorphic sections.
This local statement only requires the K\"ahler condition.
The key difference between the projective vs.\ K\"ahler case is, of course, the abundance of global holomorphic sections. 
These provide a link between the algebraic and analytic moduli. They are also well-behaved with respect to limits. The technique described here mimics that introduced by Jun Li in \cite{Li:93}.

We henceforth assume $[\omega]\in H^2(X,\ZBbb)$.
Let $L\to X$ be a complex line bundle with  $c_1(L)=[\omega]$. 
Define the numerical invariant:
\begin{equation} \label{eqn:hilbert-poly}
\tau_E(m):=\int_X \ch(E\otimes L^{m}){\rm td}(X) \ .
\end{equation}
 Since $\omega$ is a $(1,1)$ class, $L$ may be endowed with a holomorphic structure $\Lcal$ making it
  the ample line bundle defining the polarization of $X$. 
We also fix a hermitian metric on $L$ with respect to which the Chern connection of $\Lcal$ has curvature $-2\pi i\omega$.
  Use the following notation: $\Ecal(m):= \Ecal\otimes\Lcal^m$.  The key property we exploit is the following, which is a consequence of Maruyama's boundedness result \cite{Maruyama:81}, as well as the Hirzebruch-Riemann-Roch theorem.

\begin{proposition} \label{prop:maruyama}
There is $M\geq 1$ such that for all $m\geq M$ and all $A\in \Acal^{ss}(E,h)$, if $\Ecal=(E,\dbar_A)$ then
the bundle $\Ecal(m)$ is globally generated and all higher cohomology groups vanish. In particular, $\dim H^0(X, \Ecal(m))=\tau_E(m)$ for $m\geq M$.
\end{proposition}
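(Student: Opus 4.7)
The plan rests on two pillars: Maruyama's boundedness of the family of semistable sheaves with fixed topological invariants, and the existence of a uniform Castelnuovo--Mumford regularity bound valid on any bounded family.

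\textbf{Step 1. Reduction to a bounded family.} For every $A\in\Acal^{ss}(E,h)$, the sheaf $\Ecal=(E,\dbar_A)$ is a locally free $\omega$-semistable coherent sheaf whose Chern character agrees with $\ch(E)$, since Chern classes of a holomorphic bundle depend only on the underlying topological bundle. In particular the Hilbert polynomial with respect to $\Lcal$ equals $\tau_E$ for every such $\Ecal$. Maruyama's boundedness theorem (applicable since $(X,\Lcal)$ is polarized projective) then produces a scheme $T$ of finite type over $\CBbb$ and a coherent sheaf $\Fcal$ on $T\times X$, flat over $T$, such that every semistable $\Ecal$ with $\ch(\Ecal)=\ch(E)$ occurs as some fiber $\Fcal_t$.

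\textbf{Step 2. Uniform vanishing and global generation.} Apply Serre's theorem (cohomology and base change) to $\Fcal$ on the projective $T$-scheme $T\times X$, using the relative ampleness of $p_X^\ast\Lcal$. Since $T$ is of finite type, there is an $M\geq 1$ such that for all $m\geq M$:
\begin{equation*}
R^i\pi_{T\ast}\bigl(\Fcal\otimes p_X^\ast\Lcal^m\bigr)=0\quad(i\geq 1),
\end{equation*}
and the evaluation morphism $\pi_T^\ast\pi_{T\ast}\bigl(\Fcal\otimes p_X^\ast\Lcal^m\bigr)\to \Fcal\otimes p_X^\ast\Lcal^m$ is surjective. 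Restricting to a fiber $\{t\}\times X$ and using cohomology and base change yields $H^i(X,\Fcal_t\otimes\Lcal^m)=0$ for $i\geq 1$ and global generation of $\Fcal_t\otimes\Lcal^m$. Since every $\Ecal(m)$ coming from some $A\in\Acal^{ss}(E,h)$ is of the form $\Fcal_t\otimes\Lcal^m$, the conclusion holds uniformly in $A$.

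\textbf{Step 3. The dimension count.} Once higher cohomology vanishes, Hirzebruch--Riemann--Roch gives
\begin{equation*}
\dim H^0(X,\Ecal(m))=\chi(X,\Ecal(m))=\int_X\ch(\Ecal(m))\,\mathrm{td}(X)=\int_X\ch(E\otimes L^m)\,\mathrm{td}(X)=\tau_E(m),
\end{equation*}
the third equality again because $\ch(\Ecal)=\ch(E)$ in $H^\ast(X,\QBbb)$.

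\textbf{Main obstacle.} The only delicate point is invoking Maruyama's theorem in precisely the form required: it is phrased for $\omega$-semistable torsion-free coherent sheaves of fixed rank and Chern classes on a polarized projective manifold, and its proof is substantial. In our setting the Chern character is fixed (topologically) and the sheaves are automatically torsion-free (even locally free), so we are exactly in its hypotheses. Once boundedness is granted, the remainder is a standard application of Serre vanishing, base change, and Riemann--Roch.
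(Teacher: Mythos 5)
Your argument is correct and follows exactly the route the paper indicates: Maruyama's boundedness of the family of $\omega$-semistable sheaves with the fixed topological invariants of $E$, followed by uniform Serre vanishing and global generation over the finite-type parameter space (via cohomology and base change), and Hirzebruch--Riemann--Roch for the dimension count. The paper gives no further detail beyond citing Maruyama and Riemann--Roch, so your write-up simply supplies the standard intermediate steps of the same proof.
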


In the following, we shall assume $m$ has been fixed sufficiently large (possibly larger than in the previous proposition).
Fix a vector space $V$ of dimension $\tau_E(m)$, and let 
\begin{equation} \label{eqn:H}
\Hcal=V\otimes \Ocal_X(-m)\ .
\end{equation}
 The \emph{Grothendieck Quot scheme} $\Quot(\Hcal, \tau_E)$ is a projective scheme parametrizing isomorphism classes of quotients $\Hcal \to \Fcal\to 0$, where $\Fcal\to X$ is a coherent sheaf with Hilbert polynomial $\tau_E$ \cite{Grothendieck:61,AltmanKleiman:80}.
Proposition \ref{prop:maruyama} states that there is a uniform $m$ such that for every $A\in \Acal^{ss}(E,h)$ there is a quotient $\Hcal\to \Ecal\to 0$ in $\Quot(\Hcal, \tau_E)$ with $\Ecal\simeq (E,\dbar_A)$. The next result begins the comparison between Uhlenbeck limits and limits in $\Quot(\Hcal, \tau_E)$.

\begin{proposition} \label{prop:limit_sections}
Let $\{A_i\}\subset \Acal^{ss}(E,h)$, and suppose $A_i\to A_\infty$ in the sense of Uhlenbeck (Theorem \ref{thm:uhlenbeck}), and assume uniform bounds on the Hermitian-Einstein tensors. 
Then there is a quotient $\Hcal\to \Fcal_\infty\to 0$ in $\Quot(\Hcal, \tau_E)$ and an inclusion $\Fcal_\infty\hookrightarrow \Ecal_\infty$ such that $\Fcal_\infty^{\ast\ast}\simeq \Ecal_\infty$. 
\end{proposition}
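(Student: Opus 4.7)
The plan is to combine two compactness results: projectivity of the Grothendieck Quot scheme on the algebraic side, and smooth convergence of $L^2$-normalized holomorphic sections on the analytic side, and then to match the two limits off $S_\infty$ and extend using reflexivity of $\Ecal_\infty$.

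First, for each $i$ set $\Ecal_i=(E,\dbar_{A_i})$. Proposition \ref{prop:maruyama} gives $\dim H^0(X,\Ecal_i(m))=\tau_E(m)=\dim V$ together with global generation of $\Ecal_i(m)$. Using $h$, the fixed metric on $\Lcal^m$, and $\omega$ to equip $H^0(X,\Ecal_i(m))$ with its $L^2$-inner product, one picks orthonormal bases; this identifies $V\simeq H^0(X,\Ecal_i(m))$ up to the compact group $\U(\tau_E(m))$ and yields surjections $q_i\colon \Hcal\twoheadrightarrow \Ecal_i$, i.e.\ points $[q_i]\in\Quot(\Hcal,\tau_E)$. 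Projectivity of $\Quot(\Hcal,\tau_E)$ and compactness of $\U(\tau_E(m))$ allow, after passing to a subsequence, $[q_i]\to[q_\infty]$ for some quotient $q_\infty\colon \Hcal\twoheadrightarrow \Fcal_\infty$ with Hilbert polynomial $\tau_E$. This produces the required point of the Quot scheme.

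Next, I would identify $\Fcal_\infty$ with a subsheaf of $\Ecal_\infty$ on $X\setminus S_\infty$. On any compact $K\subset X\setminus S_\infty$, the local unitary-gauge convergence of $A_i$ from Theorem \ref{thm:uhlenbeck}, together with the uniform bound on $\Lambda F_{A_i}$, bootstraps via elliptic regularity to smooth convergence of the integrable operators $\dbar_{A_i}$ to $\dbar_{A_\infty}$ on $K$. For each basis vector $e\in V$, the $L^2$-normalized holomorphic section $\sigma_i^e\in H^0(X,\Ecal_i(m))$ satisfies a Bochner--Kodaira inequality whose zeroth-order term is uniformly bounded, giving uniform $C^0_{\mathrm{loc}}$- and hence $C^k_{\mathrm{loc}}$-bounds on $K$. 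After a diagonal subsequence the $\sigma_i^e$ converge smoothly on compact subsets of $X\setminus S_\infty$ to a holomorphic section $\sigma_\infty^e$ of $\Ecal_\infty(m)$. Since the local stalk of $\Fcal_\infty$ at $x\in X\setminus S_\infty$ is determined by the kernel of evaluation at $x$, and evaluation commutes with smooth convergence, the resulting map $\Hcal|_{X\setminus S_\infty}\to \Ecal_\infty|_{X\setminus S_\infty}$ has image equal to $\Fcal_\infty|_{X\setminus S_\infty}$.

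Finally, because $S_\infty$ has codimension $\geq 2$ (Proposition \ref{prop:admissible}) and $\Ecal_\infty$ is reflexive, the inclusion $\Fcal_\infty|_{X\setminus S_\infty}\hookrightarrow \Ecal_\infty|_{X\setminus S_\infty}$ extends uniquely to a sheaf map $\Fcal_\infty\to\Ecal_\infty$ on $X$ (Hartogs extension into reflexive sheaves). At a generic point of $X\setminus S_\infty$ the $\tau_E(m)$ sections $\sigma_\infty^e$ span the fibre of $\Ecal_\infty(m)$ (by openness of pointwise surjectivity in each $\Ecal_i$), so the inclusion has maximal rank $\rk E$ generically and is an isomorphism off a proper analytic subset; double-dualizing gives $\Fcal_\infty^{\ast\ast}\simeq \Ecal_\infty^{\ast\ast}=\Ecal_\infty$. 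The chief obstacle is the analytic step: the non-concentration of the $L^2$-normalized sections on $S_\infty$, so that the $\sigma_\infty^e$ remain linearly independent at generic points and $\Fcal_\infty$ does not drop rank. This is supplied by the uniform sup-estimates for holomorphic sections of admissible bundles in \cite{BandoSiu:94}, derived from the uniform bound on $\Lambda F_{A_i}$.
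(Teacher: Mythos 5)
Your outline follows the same skeleton as the paper's argument (which is a mild generalization of \cite[Sec.\ 4.2]{GSTW:18}): uniform sup bounds on $L^2$-normalized sections, extraction of limiting sections, compactness of $\Quot(\Hcal,\tau_E)$, identification of the two limits, and passage to the double dual. However, the proposal elides precisely the steps that carry the difficulty. The claim that at a generic point of $X\setminus S_\infty$ the limiting sections $\sigma_\infty^e$ span the fibre of $\Ecal_\infty(m)$ ``by openness of pointwise surjectivity in each $\Ecal_i$'' is not an argument: surjectivity of the evaluation maps is not preserved under limits, and this is exactly the subtlety the paper flags (``the limiting sections \ldots\ nor necessarily do they generate the fiber of $\Ecal_\infty$''). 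The non-concentration you invoke does give that orthonormality, hence linear independence in $H^0(X,\Ecal_\infty(m))$, survives the limit; but $\tau_E(m)$ linearly independent sections may still generate a saturated subsheaf of rank strictly less than $\rk E$. Showing that the image sheaf of $q_\infty$ has rank $\rk E$ and Hilbert polynomial $\tau_E$ (possibly only after a further twist by $\Lcal$) is the substantive content of \cite[Lemma 4.3]{GSTW:18} and does not follow from properties of the $\Ecal_i$ at finite $i$.

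Two further gaps. First, you identify the Quot-scheme limit $\Fcal_\infty$ with the sheaf generated by the analytic limit sections because ``evaluation commutes with smooth convergence''; but convergence in $\Quot(\Hcal,\tau_E)$ is a flat/algebraic limit of the quotients $\Hcal\to\Fcal_i$, and the isomorphisms $f_i\colon \Fcal_i\to\Ecal_i$ need not converge, so matching the flat limit with the analytically defined image sheaf requires an argument; in \cite[Lemma 4.4]{GSTW:18} this is done by producing a map between the two and using that both are quotients of $\Hcal$ with the \emph{same} Hilbert polynomial. Second, your last step --- ``an isomorphism off a proper analytic subset; double-dualizing gives $\Fcal_\infty^{\ast\ast}\simeq\Ecal_\infty$'' --- is insufficient: generic isomorphism only controls the quotient in codimension $\geq 1$. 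For instance, for a divisor $D$ the inclusion $\Ecal_\infty\otimes\Ocal_X(-D)\subset\Ecal_\infty$ is an isomorphism off $D$, yet its double dual is not $\Ecal_\infty$. One needs $\Ecal_\infty/\Fcal_\infty$ (equivalently $\Tcal_\infty$) supported in codimension $\geq 2$, which the paper deduces from the preservation of the first Chern class under Uhlenbeck limits; without this input the conclusion $\Fcal_\infty^{\ast\ast}\simeq\Ecal_\infty$ does not follow.
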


The proof of this result for sequences of HYM connections is in \cite[Sec.\ 4.2]{GSTW:18}, but the proof there works as well under the weaker assumption of bounded Hermitian-Einstein tensor. Indeed, the first key point is the application of the Bochner formula to obtain uniform bounds on $L^2$-holomorphic sections. The precise statement is that if $s\in H^0(X,\Ecal(m))$ then there is a constant $C$ depending only on the geometry of $X$, $m$, and the uniform bound on the Hermitian-Einstein tensor, such that $\sup_X|s|\leq C \Vert s\Vert_{L^2}$.
  In this way one can extract a convergent subsequence of orthonormal sections to obtain a map $q_\infty: \Hcal \to \Ecal_\infty$. The limiting sections may no longer form a basis of $H^{0}(X,\Ecal_{\infty}(m))$, nor necessarily do they
   generate the fiber of $\Ecal_\infty$. Remarkably, though, it is still the case that the rank of the image sheaf $\widetilde \Ecal_\infty\subset\Ecal_\infty$ of $q_\infty$
    agrees with $\rank(E)$ and has Hilbert polynomial $\tau_E$ (for this one may have to twist with a further power of $\Lcal$). 
    In fact, the quotient sheaf $\mathcal{T}_{\infty }=\mathcal{E}_{\infty }/%
\widetilde{\mathcal{E}}_{\infty }$ turns out to be supported in complex
codimension $2$ (the first Chern class is preserved under Uhlenbeck limits). 
Hence, in particular, $(\widetilde \Ecal_\infty)^{\ast\ast}\simeq \Ecal_\infty$.  See \cite[proof of Lemma 4.3]{GSTW:18}
for more details.

The second ingredient in the proof is the fact that $\Quot(\Hcal, \tau_E)$ is  compact in the analytic topology. Hence, after passing to a subsequence, we may assume the $q_i$ converge. Convergence in
 $\Quot(\Hcal, \tau_E)$ means the following: there is a convergent sequence of quotients $\Fcal_i\to \Fcal_\infty$ and isomorphisms $f_i$ making the following diagram commute
\begin{equation*}
\begin{split}
\xymatrix{
\Hcal\ar@{=}[d] \ar[r] & \Fcal_i \ar[d]^{f_i} \ar[r] & 0 \\
\Hcal\ar[r]^{q_i}&\Ecal_i\ar[r] & 0
}
\end{split}
\end{equation*}
The proof is completed, as in \cite[Lemma 4.4]{GSTW:18},  by showing that $\Fcal_\infty\simeq \widetilde \Ecal_\infty$. 
The crucial point that is used in showing this is that the two sheaves are quotients of $\Hcal$ with  the same Hilbert polynomial.

\section{Analytic cycles and the blow-up set} \label{sec:cycle}
In the case of the stronger notion of convergence of Uhlenbeck-Tian, we go one step further  and identify the cycle associated to the sheaf $\Fcal_{\infty}$ with the cycle $\Ccal_{\infty}$ that arises from bubbling of the connections. The candidate is the following:
 for any torsion-free sheaf $\Fcal\to X$, define a codimension $2$ cycle $\Ccal_\Fcal$
from the top dimensional stratum of the support of $\Fcal^{\ast\ast}/\Fcal$. See for example \cite[Sec.\ 2.5.3]{GSTW:18}.

\begin{proposition} \label{prop:cycles}
Let $A_{i}$ be a sequence of connections as in Proposition \ref{prop:limit_sections}, 
and suppose furthermore that they 
converge to
an ideal connection $(A_{\infty },\mathcal{C}_{\infty },S(A_{\infty }))$ in the sense of Theorem \ref{thm:ideal-convergence}. Let $\Hcal\to \Fcal_\infty$ be as in the statement of Proposition \ref{prop:limit_sections}. 
Then  $\mathcal{C}_{\infty }=
\mathcal{C}_{\mathcal{F}_{\infty }}$.
\end{proposition}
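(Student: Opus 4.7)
The plan is to follow the strategy of \cite[Sec.~4.3]{GSTW:18}, adapted to sequences of smooth connections with merely uniformly bounded Hermitian-Einstein tensor rather than smooth HYM connections. I would establish $\Ccal_\infty=\Ccal_{\Fcal_\infty}$ in two stages: first equality of supports, and then matching of multiplicities along each top-dimensional irreducible component.

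For the first stage, I would use that on $X\setminus(|\Ccal_\infty|\cup S(A_\infty))$ the convergence $A_i\to A_\infty$ is $C^\infty_{loc}$, and that the Bochner-type estimate $\sup_X|s|_A\leq C\Vert s\Vert_{L^2,A}$ used in the proof of Proposition \ref{prop:limit_sections} requires only the uniform bound on $\sqrt{-1}\Lambda F_A$. An $L^2$-orthonormal basis of $H^0(X,\Ecal_i(m))$ therefore converges smoothly to sections of $\Ecal_\infty(m)$ that fiberwise generate on this open set, so the quotient $q_\infty:\Hcal\to\Fcal_\infty\hookrightarrow\Ecal_\infty$ is surjective there. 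Consequently $\supp(\Ecal_\infty/\Fcal_\infty)\subseteq|\Ccal_\infty|\cup S(A_\infty)$, yielding $|\Ccal_{\Fcal_\infty}|\subseteq|\Ccal_\infty|$ modulo the codimension-three set $S(A_\infty)$. Conversely, at a point $x_0\in|\Ccal_\infty|\setminus S(A_\infty)$ positive energy concentrates, and the uniform sup-bound on $L^2$-sections forces the limit sections to fail to span the fiber of $\Ecal_\infty$ at $x_0$; hence $x_0\in\supp(\Ecal_\infty/\Fcal_\infty)$ and $|\Ccal_\infty|\subseteq|\Ccal_{\Fcal_\infty}|$ off $S(A_\infty)$.

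To match multiplicities along a top-dimensional component $Z$, I would fix a smooth point $p\in Z$ disjoint from the other components and from $S(A_\infty)$ and choose a holomorphic $2$-disk $\Delta\hookrightarrow X$ transverse to $Z$ at $p$. After shrinking, $\Delta\cap(|\Ccal_\infty|\cup S(A_\infty))=\{p\}$. The hypotheses of Theorem \ref{thm:ideal-convergence} are preserved under restriction: the $A_i|_\Delta$ have bounded Hermitian-Einstein tensor and converge to the admissible connection $A_\infty|_\Delta$ on the surface $\Delta$, while $\Hcal|_\Delta\to\Fcal_\infty|_\Delta\hookrightarrow\Ecal_\infty|_\Delta$ inherits a compatible $\Quot$-style limit. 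In the known $n=2$ case, both $\mult_p(\Ccal_\infty|_\Delta)$ and $\mult_p(\Ccal_{\Fcal_\infty}|_\Delta)$ are equal to the length of $(\Ecal_\infty/\Fcal_\infty)_p$ on $\Delta$, which by the current convergence \eqref{eqn:currents-converge} coincides with $\lim_{\sigma\downarrow 0}\lim_{i\to\infty}\int_{B_\sigma(p)\cap\Delta}\bigl(\ch_2(A_i)-\ch_2(A_\infty)\bigr)$. Transverse intersection multiplicities recover ambient ones, so $\mult_Z(\Ccal_\infty)=\mult_Z(\Ccal_{\Fcal_\infty})$ for every top-dimensional component $Z$, completing the argument.

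The hard part will be the multiplicity step. The set-theoretic identification is a direct consequence of smooth convergence off the bubbling locus together with the surjectivity argument already used in Proposition \ref{prop:limit_sections}. Matching multiplicities, however, cannot be settled purely in cohomology, since two effective $(n-2)$-cycles with the same class and the same support need not agree. The reduction to a transverse $2$-disk is therefore essential, and one must verify that the Bochner estimates, the $\Quot$-limit construction, and the analytic bubbling current all restrict compatibly to a generic transverse $\Delta$ --- this is the technical content behind the paper's remark that the proof of \cite{GSTW:18} needs only ``small modifications.''
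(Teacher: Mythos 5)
Your overall strategy (supports plus multiplicities, with multiplicities detected on a transverse slice) is in the right spirit, but as written it has two genuine gaps, and it bypasses the one piece of structure that the paper actually uses to close the argument. First, the converse support inclusion is not proved: you assert that at $x_0\in|\Ccal_\infty|\setminus S(A_\infty)$ ``the uniform sup-bound on $L^2$-sections forces the limit sections to fail to span the fiber,'' but no such implication is available --- energy concentration along the sequence does not obviously obstruct the limiting evaluation map $q_\infty$ from being surjective at $x_0$, and proving that it does is essentially as hard as the proposition itself. The paper never proves this inclusion directly. Instead it first establishes the cohomological identity $[\Ccal_\infty]=[\Ccal_{\Fcal_\infty}]$ (via $[\ch_2(A_\infty)]=\ch_2(\Fcal_\infty^{\ast\ast})$, the exact sequence $0\to\Fcal_\infty\to\Fcal_\infty^{\ast\ast}\to\Tcal_\infty\to 0$, preservation of Chern classes, Chern--Weil and the current convergence \eqref{eqn:currents-converge}); it then shows that every top-dimensional component $Z$ of $\supp(\Tcal_\infty)$ occurs in $\Ccal_\infty$ with multiplicity $m_Z$, and finally uses effectivity together with the equality of classes to exclude any extra components of $\Ccal_\infty$. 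So the direction you try to prove ``by hand'' is exactly the one that is obtained for free from cohomology; dismissing the cohomological step on the grounds that class plus support does not determine a cycle misses how it is actually deployed.

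Second, the multiplicity step as you set it up does not go through. Restricting to a holomorphic $2$-disk $\Delta$ does not preserve your hypotheses: the bound on $\Lambda_\omega F_{A_i}$ controls only one contraction of the curvature and gives no bound on the Hermitian--Einstein tensor of $A_i|_\Delta$ with respect to the induced metric on $\Delta$, nor do the global sections and the $\Quot$-limit restrict to a surface-case instance of the theorem you could quote; and the ``known $n=2$ case'' identification of the analytic multiplicity with the length of $(\Ecal_\infty/\Fcal_\infty)_p$ for arbitrary Uhlenbeck limits with bounded Hermitian--Einstein tensor is not an available black box --- it is essentially the statement being proved. What the paper uses instead is the slice identity
\begin{equation*}
m_{Z}=\lim_{i\rightarrow \infty }\frac{1}{8\pi ^{2}}\int_{\Sigma}\tr(F_{A_{i}}\wedge F_{A_{i}})-\tr(F_{A_{\infty }}\wedge F_{A_{\infty }})\ ,
\end{equation*}
for a generic real $4$-dimensional slice $\Sigma$ transverse to $Z$, where the right-hand side is identified with the multiplicity of $Z$ in $\Ccal_\infty$ (or with $0$ if $Z\not\subset|\Ccal_\infty|$) by \cite[Lemma 3.13]{GSTW:18} and \cite[Lemma 4.1]{SibleyWentworth:15}, and the equality with the algebraic multiplicity $m_Z$ comes from \cite[Prop.\ 4.7]{GSTW:18}, whose proof needs only the sup-norm estimate on sections and hence only the bound on the Hermitian--Einstein tensor. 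If you replace your two stages by: (i) the cohomological equality of classes, and (ii) the slice identity above for each component of $\supp(\Tcal_\infty)$, your argument becomes the paper's; as it stands, both the reverse support inclusion and the reduction to a surface case are unsupported.
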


 The proof of this result follows from the discussion in \cite[Sec.\ 4.3]{GSTW:18} (see in
particular  Prop.\ 4.7). Although the result there is
stated for sequence of HYM connections, this is required only to obtain the same sup-norm inequality on the global
sections of $\mathcal{E}(m)$ that was used to obtain Proposition \ref{prop:limit_sections}.
Thus, the uniform bound on the Hermitian-Einstein tensor suffices. 

Let us sketch the argument. 
The first key point is that $[\mathcal{C}_{\mathcal{F}
_{\infty }}]=[$ $\mathcal{C}_{\infty }]$ in rational cohomology. Indeed, the connection $A_{\infty }$ is defined on the smooth locus of 
 the sheaf $\mathcal{F}_{\infty }^{\ast \ast }$ and is smooth there,
and $\ch_{2}(A_{\infty })$ defines a closed current (see Section \ref{sec:uhlenbeck}). 
It then follows as in the proof of \cite[Prop.\ 3.3]{SibleyWentworth:15}
 that $
[\ch_{2}(A_{\infty })]=\ch_{2}(\mathcal{F}_{\infty }^{\ast \ast })$. The
exact sequence%
\begin{equation*}
0\lra \mathcal{F}_{\infty }\lra \mathcal{F}_{\infty }^{\ast
\ast }\lra \mathcal{T}_{\infty }\lra 0\ ,
\end{equation*}%
implies that 
\begin{equation*}
\lbrack \ch_{2}(A_{\infty })]=\ch_{2}(\mathcal{F}_{\infty })+\ch_{2}(\mathcal{T}%
_{\infty })=\ch_{2}(E)+[\mathcal{C}_{\mathcal{F}_{\infty }}]\ ,
\end{equation*}%
where in the second inequality we have used the fact that the Chern classes of $%
\mathcal{F}_{\infty }$ are the same as those of $E$, and Proposition 3.1 of \cite{SibleyWentworth:15}
(see also the latest arxiv version of this reference). By the convergence of
the currents in Theorem \ref{thm:ideal-convergence} and  Chern-Weil theory, we have 
\begin{equation*}
\ch_{2}(E)+[\mathcal{C}_{\infty }]=[\ch_{2}(A_{i})]+[\mathcal{C}_{\infty
}]=[\ch_{2}(A_{\infty })].
\end{equation*}
Combining these two equalities gives the statement. 

What remains to be shown is that given any irreducible component $
Z\subset \supp(\mathcal{T}_{\infty })$, for the associated multiplicity $
m_{Z}$ as defined in \cite[Sec.\ 2.5.3]{GSTW:18}, we have an equality%
\begin{equation*}
m_{Z}=\lim_{i\rightarrow \infty }\frac{1}{8\pi ^{2}}\int_{\Sigma
}\tr(F_{A_{i}}\wedge F_{A_{i}})-\tr(F_{A_{\infty }}\wedge F_{A_{\infty }})\ ,
\end{equation*}%
where $\Sigma $ is a generic real $4$-dimensional slice
intersecting $Z$ transversely  in a single smooth point. The point is that if $Z$ is
contained in the support $|\mathcal{C}_{\infty }|$ then it must be equal to
one of the irreducible components. In this case, the number on the right
hand side of the equality above is exactly the multiplicity of this component in
the cycle $\mathcal{C}_{\infty }$, and otherwise this number is zero, (see
 \cite[Lemma 3.13]{GSTW:18} and  \cite[Lemma 4.1]{SibleyWentworth:15} and again note that the proof is
completely general). If the equality holds, this number cannot be zero,
since $m_{Z}$ is strictly positive by definition, and therefore $Z$ must be
a component of $\mathcal{C}_{\infty }$, and the multiplicities agree. Since $%
\mathcal{C}_{\infty }$ and $\mathcal{C}_{\mathcal{F}_{\infty }}$ are equal
in cohomology, there can be no other irreducible components of $\mathcal{C}%
_{\infty }$, and so $\mathcal{C}_{\infty }=$ $\mathcal{C}_{\mathcal{F}%
_{\infty }}$. For more details, see the proof of  \cite[Prop.\ 4.7]{GSTW:18}).

\begin{remark}
It should be emphasized that Proposition \ref{prop:cycles} does not claim that the support of $\Fcal_\infty^{\ast\ast}/\Fcal_\infty$ coincides with the full bubbling locus $|\Ccal_\infty|\cup S(A_\infty)$; only the top dimensional strata are necessarily equal. This differs from what occurs, for example, along the Yang-Mills flow (see \cite[Thm.\ 1.1]{SibleyWentworth:15}).
 It would be interesting to understand the behavior of the higher codimensional pieces from this perspective. There are recent examples due to Chen-Sun indicating that this should be subtle (see \cite{ChenSun:18b,ChenSun:18a}).
\end{remark}

\section{A remark on the topology of the Quot scheme}
 In this section we consider the relationship between the Quot scheme $\Quot(\Hcal,\tau_E)$ discussed in Section \ref{sec:quot}, and the infinite dimensional space $\Acal^{1,1}(E,h)$ of integrable connections. 
 We are thus interested in the points in $\Quot(\Hcal,\tau_E)$ where the quotient sheaf is locally free and has underlying $C^\infty$ bundle isomorphic to $E$. Such a point corresponds to an isomorphism class of holomorphic structures on $E$, or equivalently, to a complex gauge orbit in $\Acal^{1,1}(E,h)$. Conversely, a connection $A\in \Acal^{1,1}(E,h)$ gives a holomorphic bundle which, provided $m$ is sufficiently large, can be realized as a quotient. We wish to show that this correspondence between complex gauge orbits in $\Acal^{1,1}(E,h)$ and points in $\Quot(\Hcal,\tau_E)$ can be made continuous in the respective topologies. Since the complex gauge
  orbit space in $\Acal^{1,1}(E,h)$ is non-Hausdorff in general, we will lift to a map from open sets in $\Acal^{1,1}(E,h)$ itself.

This gives rise to the following notion. 
Let $U\subset \Acal^{1,1}(E,h)$. We call $\sigma : U\to \Quot(\Hcal,\tau_E)$ a \emph{classifying map}
if the quotient $\sigma(A)$ is a holomorphic bundle isomorphic to $(E,\dbar_A)$.
Recall from Section \ref{sec:quot} that the bundle $\Hcal$  depends on a sufficiently large choice of $m$, which we omit from the notation. Then the result is the following. 

\begin{theorem} \label{thm:classifying}
Fix $A_0\in \Acal^{1,1}(E,h)$. Then for $m$ sufficiently large (depending on $A_0$), there is an open neighborhood $U\subset\Acal^{1,1}(E,h)$ of  $A_0$ and a continuous classifying map $\sigma : U\to \Quot(\Hcal,\tau_E)$. On $\Acal^{ss}(E,h)$, the twist $m$ may be chosen uniformly. 
\end{theorem}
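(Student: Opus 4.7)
The plan is to construct $\sigma$ by Hodge-theoretic projection. For the local statement, choose $m\geq 1$ large enough that $\Ecal_0(m):=(E,\dbar_{A_0})\otimes \Lcal^m$ is globally generated with $H^i(X,\Ecal_0(m))=0$ for $i>0$; this is possible by Serre vanishing applied to the locally free sheaf $\Ecal_0$ and the ample line bundle $\Lcal$. By Riemann-Roch, $\dim H^0(X,\Ecal_0(m))=\tau_E(m)$, and we fix an isomorphism $V\cong H^0(X,\Ecal_0(m))$. For the uniform statement over $\Acal^{ss}(E,h)$, we instead invoke Proposition~\ref{prop:maruyama}, which provides a single such $m$ valid for every semistable $A$; together with the openness of $\Acal^{ss}(E,h)$ in $\Acal^{1,1}(E,h)$ (Corollary~\ref{cor:open}), this lets the local construction below proceed with a fixed $m$ near every semistable point.

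\textbf{The map $\sigma$.} For $A$ varying in a $C^\infty$-neighborhood $U$ of $A_0$, view $V$ as a fixed finite-dimensional subspace of $\Omega^0(X,E\otimes L^m)$: the underlying $C^\infty$ bundle $E\otimes L^m$ is independent of $A$, so the smooth sections comprising $V$ are well-defined regardless of the holomorphic structure. Equip $E\otimes L^m$ with the tensor product of $h$ and a fixed hermitian metric on $L^m$, and let $\pi_A:\Omega^0(X,E\otimes L^m)\to\ker\dbar_A$ be the $L^2$-orthogonal (Hodge) projection; this kernel is canonically $H^0(X,\Ecal_A(m))$. Define
\[
\tau_A:=\pi_A|_V : V\lra H^0(X,\Ecal_A(m)),\qquad q_A:=\mathrm{ev}\circ(\tau_A\otimes\id_{\Ocal_X(-m)}):\Hcal\lra\Ecal_A,
\]
and set $\sigma(A):=[q_A]\in\Quot(\Hcal,\tau_E)$. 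By construction $\tau_{A_0}=\id_V$.

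\textbf{Continuity, surjectivity, and the main obstacle.} Two points remain: surjectivity of $q_A$ on $U$, and continuity of $\sigma$. By upper semicontinuity of cohomology dimensions in continuous families of holomorphic structures on $E\otimes L^m$, after shrinking $U$ we may assume $H^i(X,\Ecal_A(m))=0$ for $i>0$, $\dim H^0=\tau_E(m)$, and $\Ecal_A(m)$ is globally generated for all $A\in U$ (global generation being an open condition at each $x$, hence on all of $X$ by compactness). Combined with $\tau_{A_0}=\id_V$, a further shrinking of $U$ makes $\tau_A$ an isomorphism and $q_A$ surjective for $A\in U$. The main analytic obstacle is the continuity of $\tau_A$ in the smooth topology: the Dolbeault Laplacian $\Delta_A=\dbar_A\dbar_A^{\ast}+\dbar_A^{\ast}\dbar_A$ depends smoothly on $A$ as a family of elliptic self-adjoint operators on Sobolev spaces of sections of $E\otimes L^m$, and by the vanishing above its kernel has locally constant finite dimension on $U$; a standard resolvent/spectral estimate then gives continuous dependence of $\pi_A$ in operator norm on each Sobolev space, and elliptic bootstrapping upgrades this to $C^\infty$-continuity of $\tau_A(s)$ for every $s\in V$. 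Continuity of $\sigma$ into the analytic topology on $\Quot(\Hcal,\tau_E)$ is then obtained via Grothendieck's embedding of $\Quot$ in a Grassmannian of quotients of $H^0(X,\Hcal(N))$ for $N\gg 0$: by the same harmonic projection argument applied after a further Serre vanishing bound (uniform on a relatively compact sub-neighborhood of $U$), the induced family of quotients $H^0(X,\Hcal(N))\to H^0(X,\Ecal_A(N))$ also varies continuously, yielding a continuous map into the Grassmannian and hence into $\Quot$. The chief difficulty is this Hodge-theoretic continuity of $\pi_A$, which is classical but requires care to ensure that the finite-dimensional Hodge decompositions assemble into a $C^\infty$-continuous family of smoothing projections.
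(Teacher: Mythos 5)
Your construction is, at its core, the same as the paper's: fix $V_0=H^0(X,\Ecal_0(m))$, identify it with $H^0(X,\Ecal_A(m))$ for nearby $A$ by an $L^2$-orthogonal projection, define $\sigma(A)$ by evaluation, and prove continuity through Grothendieck's embedding of $\Quot(\Hcal,\tau_E)$ into a Grassmannian of quotients of $H^0(X,\Hcal(m+n))$. The genuine difference lies in how the identification and its continuity are established. You project the fixed space $V$ onto $\ker\dbar_A$ using the Hodge projection of the \emph{varying} Laplacian $\Delta_A$, and you appeal to resolvent/spectral continuity of the family $\Delta_A$ (plus constancy of kernel dimension and elliptic bootstrapping) — which you yourself flag as the chief remaining difficulty and do not prove. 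The paper goes the other way: it projects $V_A=\ker\dbar_A$ onto $V_0$ and writes the projection \emph{explicitly} as $s\mapsto s+\dbar_{A_0}^{\ast}G_0(as)$, where $G_0$ is the Green's operator of the \emph{fixed} structure $\dbar_{A_0}$ and $a=\dbar_A-\dbar_{A_0}$. Boundedness of $\dbar_{A_0}^{\ast}G_0$ then yields the needed operator-norm estimates (injectivity/invertibility for $\sup|a|$ small, Lipschitz dependence on $a$) by elementary means, with no spectral-gap or perturbation theory for the family of Laplacians; the remaining ingredient is a uniform sup bound $\sup|s|\leq C\Vert s\Vert_{L^2}$ on holomorphic sections (from the uniform Hermitian–Einstein tensor bound on $U$) to handle the $L^4$ products appearing in the twisted maps $T_A$ into the Grassmannian, after which continuity of the kernels is extracted by a constant-rank argument. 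Your route buys a cleaner conceptual statement (continuity of harmonic projections, which if established in $C^\infty$ would also let you dispense with the Bochner sup-norm estimate), while the paper's buys a self-contained, quantitative argument anchored at the single operator $G_0$; to make your version complete you would still need to supply the spectral-gap/Riesz-projection argument (using min–max continuity of eigenvalues and locally constant $\dim\ker\Delta_A$) and the bootstrap to the regularity actually needed for the Grassmannian step.

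Two cautions. First, your appeal to Corollary \ref{cor:open} in the uniform-$m$ statement is circular in the paper's logical order: that corollary is \emph{deduced from} Theorem \ref{thm:classifying} (together with openness of semistable quotients in $\Quot$). Fortunately it is also unnecessary: for $A_0\in\Acal^{ss}(E,h)$ Proposition \ref{prop:maruyama} gives the uniform $m$, and the neighborhood $U$ may simply be taken in $\Acal^{1,1}(E,h)$, since the construction only needs global generation and vanishing of higher cohomology for $\Ecal_A(m)$, arranged by shrinking $U$. Second, your openness claim for global generation on $U$ itself relies on the continuity of $\tau_A$ (to transport a generating basis), so its placement should come after, not before, that continuity is proved.
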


Throughout the proof, as in Section \ref{sec:quot}, we fix a hermitian structure $h_L$ on $L$ such that the curvature of the Chern connection of $(\Lcal, h_L)$ defines a K\"ahler metric $\omega$ on $X$.

\begin{proof}
Let $d(m,n)=\tau_E(m)\cdot \dim H^0(X,\Lcal^n)$. For $n>>1$,  $\Quot(\Hcal,\tau_E)$ is embedded in the Grassmannian $G(d(m,n), \tau_E(m+n))$ of $\tau_E(m+n)$-dimensional quotients of $\CBbb^{d(m,n)}$. More precisely, suppose  $q: \Hcal\to \Ecal$ is a point in $\Quot(\Hcal,\tau_E)$, and let $\Kcal=\ker q$. There is a sufficiently large  $n$  (uniform over the whole Quot scheme) such that
\begin{equation} \label{eqn:vanishing}
H^i(X,\Kcal(m+n))=H^i(X,\Ecal(m+n))=\{0\}\ ,\ i\geq 1
\end{equation}
(cf.\ \cite[Lemmas 1.7.2 and 1.7.6]{HuybrechtsLehn:10}).
We therefore have a short exact sequence:
\begin{equation} \label{eqn:quotient}
0\lra H^0(X, \Kcal(m+n))\lra H^0(X, \Hcal(m+n))\lra H^0(\Ecal(m+n))\lra 0
\end{equation}
Since 
the middle term 
  has dimension $d(m,n)$, 
and by \eqref{eqn:vanishing} the last term has dimension $\tau_E(m+n)$,
we obtain a point in $G(d(m,n), \tau_E(m+n))$. For $n$ sufficiently large, this is an embedding.

Given $A_0\in \Acal^{1,1}(E,h)$, $\Ecal_0=(E,\dbar_{A_0})$, 
choose $m$ such that $\Ecal_0(m)$ is globally generated and has no higher cohomology. 
Set $V_0=H^0(X, \Ecal_0(m))$. Then $\dim V_0=\tau_E(m)$. Given $s\in V_0$, $f\in \Ocal_X$, the map 
$${\rm ev} : V_0\otimes \Ocal_X\lra \Ecal_0(m) : s\otimes_\CBbb f\mapsto fs $$
realizes $\Ecal_0(m)$ as a quotient of $V_0\otimes \Ocal_X$. After twisting back by $\Ocal_X(-m)$, we have a quotient $\Hcal\to \Ecal_0\to 0$.

 For $A\in U$ (the open set $U$ remains to be specified) in order to realize $\Ecal =(E,\dbar_A)$ as a quotient of $\Hcal$, it suffices to give an isomorphism of $V_0$ with $V_A=\ker\dbar_A\subset \Gamma(E\otimes L^m)$, for then $\Ecal$ is obtained through this isomorphism followed by evaluation ${\rm ev}$ as above. Note here that we assume already that $U$ has been chosen sufficiently small so that $\Ecal(m)$ is globally generated and has no higher cohomology.
This is the first condition on $U$, and it can be arranged by semicontinuity of cohomology.

 On $\Gamma(E\otimes L^m)$ we have an $L^2$-inner product. Since $V_A$ and $V_0$ are subspaces of $\Gamma(E\otimes L^m)$, we can define a
 map by orthogonal projection $\pi_A: V_A\to V_0$. Let us write this explicitly. For $s\in V_A$, let $\pi_A(s)=s_0=s+u_s$, where $u_s\in V_0^\perp$. We require $\dbar_{A_0}s_0=0$, or $\dbar_{A_0}(s+u_s)=0$. If we write $\dbar_A=\dbar_{A_0}+a$, $a\in \Omega^{0,1}(X,\gfrak_E)$, then the above is $\dbar_{A_0}u_s=as$. Let $G_0$ be the Green's operator for the $\dbar_{A_0}$ laplacian acting on $\Omega^{0,1}(E\otimes L^m)$. In general, the Green's operator inverts the laplacian up to projection onto the orthogonal complement of 
 the harmonic forms in $\Omega^{0,1}(X,E\otimes L^{m})$. We have 
 assumed vanishing of $H^1(X,\Ecal_0(m))$, so in our case $G_0$ is a genuine inverse. Set: $u_s=\dbar_{A_0}^\ast G_0(as)$.  Then
 $$
\dbar_{A_0}u_s=\dbar_{A_0}\dbar_{A_0}^\ast G_0(as)=\square_{A_0}G_0(as)+\dbar_{A_0}^\ast\dbar_{A_0}G_0(as)=as\ ,
 $$
 as desired. Here, 
  we have used the fact that $\dbar_{A_0}G_0=G_0\dbar_{A_0}$, and that, by the integrability of $\dbar_{A}$ and $s\in V_{A}$, $\dbar_{A_0}(as)=0$. Notice that this definition of $u_s$ guarantees that it is orthogonal to $V_0$.  Moreover, since $\dbar_{A_0}^\ast G_0$ is a bounded operator, we have an estimate
\begin{equation} \label{eqn:u-est}
\Vert u_s\Vert_{L^2}\leq B \Vert as\Vert_{L^2} \leq B(\sup|a|) \Vert s\Vert_{L^2}\ .
\end{equation}
In particular, for $\sup|a|$ sufficiently small, $\Vert\pi_A(s)\Vert_{L^2}\geq (1/2)\Vert s\Vert_{L^2}$, so $\pi_A$ is injective and therefore an isomorphism. 
The classifying map  is then defined as the quotient:
$$ \sigma(A) :
\Hcal\xrightarrow{\pi_A^{-1}\otimes id}V_A\otimes \Ocal_X(-m)\xrightarrow{\rm ev} \Ecal\lra 0\ .
$$

 It remains to show that $\sigma$  is continuous.  We begin with a few preliminaries. 
For $s\in \Gamma(E\otimes L^m)$, let  
$$\widetilde \pi_A : \Gamma(E\otimes L^m)\lra \Gamma(E\otimes L^m) :  s\mapsto s+\dbar_{A_0}^\ast G_0(as)\ ,
$$
 so that $\widetilde \pi_A$ restricted to $V_A$ is $\pi_A$. Again using that $\dbar_{A_0}^\ast G_0$ is a bounded operator, we have
 $$
\Vert (\widetilde \pi_{A_1}-\widetilde \pi_{A_2})s\Vert_{L^2}=
\Vert\dbar_{A_0}^\ast G_0((a_1-a_2)s)\Vert_{L^2}\leq B\Vert (a_1-a_2)s\Vert_{L^2}\leq B\sup|a_1-a_2|\Vert s\Vert_{L^2}\ .
 $$
 It follows that $\widetilde \pi_A$ is continuous in $A$. By the argument following \eqref{eqn:u-est}, it is also uniformly invertible for $A\in U$, with $\Vert\widetilde \pi_A^{-1}\Vert\leq 2$. Hence,
 \begin{align*}
(\widetilde \pi_{A_1}^{-1}-\widetilde \pi_{A_2}^{-1})s
&= \widetilde \pi_{A_1}^{-1} (\widetilde \pi_{A_2}-\widetilde \pi_{A_1})
 \widetilde \pi_{A_2}^{-1}s \\
 \Vert (\widetilde \pi_{A_1}^{-1}-\widetilde \pi_{A_2}^{-1})s\Vert_{L^2} &\leq 4 \Vert
 \widetilde \pi_{A_1}-\widetilde \pi_{A_2}\Vert\cdot \Vert s\Vert_{L^2}\leq 4B\sup|a_1-a_2|\Vert s\Vert_{L^2}\ .
 \end{align*}
 We conclude that the map $\pi_A^{-1} : V_0\to \Gamma(E\otimes L^m)$, whose image is $V_A$, is continuous for $A\in U$, and in fact satisfies an estimate:
 \begin{equation} \label{eqn:inverse-estimate}
\Vert (\pi_{A_1}^{-1}- \pi_{A_2}^{-1})s_0\Vert_{L^2}
\leq 4B\sup|a_1-a_2|\Vert s_0\Vert_{L^2}\ ,
 \end{equation}
 for all $s_0\in V_0$.

 The second ingredient we shall need is the following. We may assume that there is a uniform bound on the Hermitian-Einstein tensors for each $A\in U$. It follows as in Section \ref{sec:quot} that we  have an estimate:
$\sup|s|\leq C\Vert s\Vert_{L^2}$, for all $s\in V_A$. Hence, 
\begin{equation} \label{eqn:sup-bound}
\sup|\pi_A^{-1}(s_0)|\leq C\Vert \pi_A^{-1}(s_0)\Vert_{L^2}\leq 2C\Vert s_0\Vert_{L^2}\ ,
\end{equation}
for all $s_0\in V_0$.

Finally, let $s_0\in V_0$ and $f\in H^0(X,\Lcal^n)$. Since $f$ is holomorphic any norm is bounded by its $L^2$ norm. Using \eqref{eqn:inverse-estimate} and \eqref{eqn:sup-bound}, there is a constant $C_1>0$ such that:
\begin{align}
\Vert f(\pi_{A_1}^{-1}-\pi_{A_2}^{-1})s_0\Vert_{L^2}^2
&\leq \Vert f\Vert_{L^4}^2\Vert (\pi_{A_1}^{-1}-\pi_{A_2}^{-1})s_0\Vert_{L^4}^2 
\leq
C_1\Vert f\Vert_{L^2}^2 \Vert (\pi_{A_1}^{-1}-\pi_{A_2}^{-1})s_0\Vert_{L^2} \Vert s_0\Vert_{L^2} \notag\\
&\leq
C_1 (\sup|a_1-a_2|) \Vert s_0\Vert_{L^2}^2 \Vert f\Vert_{L^2}^2\ .\label{eqn:L4}
\end{align}

 To prove continuity of $\sigma$,
  we show that the corresponding quotients \eqref{eqn:quotient} vary continuously in the Grassmannian for $A\in U$. 
First, notice that
\begin{equation*} \label{eqn:Hmn}
H^0(X,\Hcal(m+n))\simeq V_0\otimes H^0(X,\Lcal^n)\ ,
\end{equation*}
On $V_0\otimes H^0(X,\Lcal^n)$,
 we choose the tensor product metric of the $L^2$ metrics on $V_0$ and $H^0(X,\Lcal^n)$.
The map induced by $\sigma$ is described as follows: for each $A\in U$ we have
$$
T_A :  V_0\otimes H^0(X,\Lcal^m)\lra \Gamma(E\otimes L^{m+n}) : s\otimes_\CBbb f\mapsto \pi_A^{-1}(s)\otimes_{\Ocal_X}f
$$
with image $H^0(X, \Ecal(m+n))$.
    Moreover, it follows as in  \eqref{eqn:L4} that $T_A$ is continuous in $A$ for $A\in U$.

 Let $P_A$ denote the orthogonal projection to $\ker T_A$. The topology of the Grassmannian may defined through projection operators, so it suffices to show that 
$P_A$ is continuous in $A\in U$. Because the dimensions of the kernels of $T_A$ are constant on $U$, this reduces to showing that for any sequence $A_j\to A$
and  $s_j\in \ker T_{A_j} \subset V_0\otimes H^0(X,\Lcal^n)$, $\Vert s_j\Vert=1$, there is a 
subsequence such that $s_j\to s\in \ker T_A$. Indeed, if this is the case we may choose an orthonormal basis of such sections, $\{s_j^\alpha\}$, so that for any $s\in V_0\otimes H^0(X,\Lcal^n)$,
$$
P_{A_j} s=\sum_\alpha \langle s, s_j^\alpha\rangle s_j^\alpha\ ,
$$
and the right hand side converges to $P_A s$, and so $\Vert P_{A_j}-P_A\Vert\to 0$. By finite dimensionaliy of $V_0\otimes H^0(X,\Lcal^n)$, we may assume $s_j\to s$ for some $s\in V_0\otimes H^0(X,\Lcal^n)$. Let $s_j=s_j^0+s_j^1$ be the orthogonal decomposition with respect to the splitting $\ker T_A\oplus (\ker T_A)^\perp$. In particular, there is a constant $c>0$ such that 
\begin{equation} \label{eqn:T-est}
\Vert T_A s_j^1\Vert\geq c\Vert s_j^1\Vert\ .
\end{equation} 
But then
$$
0=T_{A_j}s_j=(T_{A_j}-T_A)s_j + T_A s_j^1
$$
and so 
$$
(T_A-T_{A_j})s_j=T_A s_j^1 \ \Longrightarrow \ \Vert T_A s_j^1\Vert \to 0\ .
$$
The estimate \eqref{eqn:T-est} implies $s_j^1\to 0$. 
Hence, $s\in \ker T_A$, and continuity of $\sigma$ is proven.  The uniformity of $m$ in the second statement follows from Proposition \ref{prop:maruyama}.
\end{proof}

The semistable quotients in $\Quot(\Hcal,\tau_E)$ form an open set \cite[Thm.\ 2.8]{Maruyama:76}. Combining this with Theorem \ref{thm:classifying} we obtain

\begin{corollary} \label{cor:open}
The set $\Acal^{ss}(E,h)$ is open in $\Acal^{1,1}(E,h)$.
\end{corollary}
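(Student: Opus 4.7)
The plan is to deduce the corollary directly from Theorem \ref{thm:classifying} together with the cited result of Maruyama that semistability is an open condition in $\Quot(\Hcal,\tau_E)$. Given $A_0\in \Acal^{ss}(E,h)$, it suffices to exhibit an open neighborhood of $A_0$ in $\Acal^{1,1}(E,h)$ contained in $\Acal^{ss}(E,h)$.

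First, using Proposition \ref{prop:maruyama}, fix $m$ large enough that the classifying construction of Section \ref{sec:quot} applies uniformly on $\Acal^{ss}(E,h)$. Apply Theorem \ref{thm:classifying} to obtain an open neighborhood $U\subset \Acal^{1,1}(E,h)$ of $A_0$ and a continuous classifying map
\[
\sigma : U \lra \Quot(\Hcal, \tau_E)
\]
with the property that $\sigma(A)$ is a quotient $\Hcal\to \Fcal_A\to 0$ whose underlying sheaf $\Fcal_A$ is the holomorphic bundle $(E,\dbar_A)$. In particular $\Fcal_{A_0}\simeq (E,\dbar_{A_0})$ is semistable, so $\sigma(A_0)$ lies in the open semistable locus $Q^{ss}\subset \Quot(\Hcal, \tau_E)$ provided by \cite[Thm.\ 2.8]{Maruyama:76}.

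By continuity of $\sigma$, the preimage $U'=\sigma^{-1}(Q^{ss})\subset U$ is open in $\Acal^{1,1}(E,h)$ and contains $A_0$. For any $A\in U'$ the quotient sheaf $\Fcal_A\simeq (E,\dbar_A)$ is semistable in the sense of the Quot scheme; since the Hilbert polynomial is $\tau_E$ and the polarization is the one defined by $\omega$, this is precisely $\omega$-slope semistability of the holomorphic bundle $(E,\dbar_A)$. Hence $U'\subset \Acal^{ss}(E,h)$, proving openness.

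There is no genuine obstacle here: the substantive work has already been done in Theorem \ref{thm:classifying}, and Maruyama's openness supplies the remaining ingredient. The one point worth emphasizing in the write-up is the compatibility between semistability of a quotient in $\Quot(\Hcal,\tau_E)$ and semistability of the corresponding connection in $\Acal^{1,1}(E,h)$, which is immediate from the fact that $\sigma(A)$ is, by construction, a presentation of the same holomorphic bundle $(E,\dbar_A)$ with the same polarization.
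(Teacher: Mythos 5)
Your argument is correct and is precisely the paper's proof: the paper deduces the corollary by combining Theorem \ref{thm:classifying} (a continuous classifying map $\sigma$ near any $A_0$) with Maruyama's openness of the semistable locus in $\Quot(\Hcal,\tau_E)$, exactly as you do by pulling back the open semistable locus under $\sigma$. Your write-up just makes explicit the compatibility of semistability on both sides, which is immediate since $\sigma(A)\simeq(E,\dbar_A)$ and the polarization is the one defined by $[\omega]$.
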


\section{Proof of the Main Theorem} \label{sec:proof}
As seen in Section \ref{sec:quot}, a consequence of the assumption that $X$ is  projective is a representation of holomorphic bundles and Uhlenbeck limits as quotients. The existence of many holomorphic sections also passes to certain line bundles on moduli spaces. This fact implies strong separation properties and will be used in this section to deduce the Main Theorem.

Let $A\in \Acal^{ss}(E,h)$ be a smooth integrable unitary connection such that the induced holomorphic bundle $\Ecal=(E,\dbar_{A})$ is semistable. 
Then there is a Seshadri filtration $\{0\}=\Fcal_0\subset\Fcal_1\subset\cdots \subset \Fcal_\ell=\Ecal$ such that the successive quotients $\Qcal_i=\Fcal_i/\Fcal_{i-1}$, $i=1,\ldots,\ell$ are stable torsion-free sheaves all of equal slope to that of $\Ecal_0$. 
Let $\Gr(\Ecal)=\oplus_{i=1}^\ell \Qcal_i$ and $\Ccal$ the cycle defined by the codimension $2$ support of $\Gr(\Ecal)^{\ast\ast}/\Gr(\Ecal)$ (see Section \ref{sec:cycle}). 
By the result of Bando-Siu referenced in Section \ref{sec:uhlenbeck}, there is an admissible HYM connection $A$ on $\Gr(\Ecal)^{\ast\ast}$, such that $(A,\Ccal, S(A))$
 defines an ideal HYM connection in the sense of Definition \ref{def:ideal-connection}. 


\begin{theorem}[\cite{DaskalWentworth:04,DaskalWentworth:07a,Sibley:15,SibleyWentworth:15}] \label{thm:flow}
Let $A_0\in\Acal^{ss}(E,h)$.  Then the Yang-Mills flow $A_t$ with initial condition $A_0$ converges in the sense of Theorem \ref{thm:ideal-convergence} to an ideal connection $[A_\infty, \Ccal_\infty, S(A_\infty)]$, where $A_\infty$ is the admissible HYM connection on $\Gr(\Ecal_0)^{\ast\ast}$, and $\Ccal_\infty$ is the codimension $2$ cycle defined by the torsion-free sheaf $\Gr(\Ecal_0)$. 
\end{theorem}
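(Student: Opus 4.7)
The plan is to combine the standard monotonicity of the Yang-Mills flow with the algebraic-analytic comparison from Sections \ref{sec:quot}--\ref{sec:cycle}. The first step is to extract an ideal HYM limit subsequentially. Along the flow, the Donaldson energy is nonincreasing, and a maximum-principle argument of Donaldson bounds $|\Lambda F_{A_t}|$ uniformly in $t$. Hence for any sequence $t_i\to\infty$, the connections $\{A_{t_i}\}$ satisfy the hypotheses of Theorem \ref{thm:uhlenbeck}, producing an Uhlenbeck limit $A_\infty$ on $X\setminus S_\infty$. Proposition \ref{prop:admissible} then upgrades this to an admissible HYM connection on a reflexive sheaf $\Ecal_\infty$, with curvature densities concentrating along a codimension-$2$ analytic cycle $\Ccal_\infty$, producing an ideal HYM triple $(A_\infty,\Ccal_\infty,S(A_\infty))$ as required by Theorem \ref{thm:ideal-convergence}.

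Second, I would identify $\Ecal_\infty$ holomorphically with $\Gr(\Ecal_0)^{\ast\ast}$. Twisting by $\Lcal^m$ for $m$ large, the holomorphic-section method (Proposition \ref{prop:limit_sections}) produces a quotient $\Hcal\to \Fcal_\infty$ in $\Quot(\Hcal,\tau_E)$ with inclusion $\Fcal_\infty\hookrightarrow\Ecal_\infty$ and $\Fcal_\infty^{\ast\ast}\simeq \Ecal_\infty$. The algebraic input I would need is that the Yang-Mills flow preserves a Seshadri filtration of $\Ecal_0$ weakly: for each $\Fcal_i\subset\Ecal_0$ in a Jordan-H\"older/Seshadri filtration, the $L^2$-orthogonal projection onto $\Fcal_i$ stays bounded along the flow, becomes a weak holomorphic projection in the limit, and splits $\Ecal_\infty$ on the complement of a codimension-$2$ set into stable summands isomorphic to $\Qcal_i=\Fcal_i/\Fcal_{i-1}$. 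This is the content of the two-dimensional analysis in \cite{DaskalWentworth:04,DaskalWentworth:07a} and its extension to all dimensions in \cite{Sibley:15,SibleyWentworth:15}. Combined with slope and Chern-character preservation along the flow, plus Jordan-H\"older uniqueness of the polystable graded, this forces $\Ecal_\infty\simeq \Gr(\Ecal_0)^{\ast\ast}$.

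Third, I would match the analytic cycle with the algebraic one and promote subsequential convergence to full-time convergence. Proposition \ref{prop:cycles} applied to any subsequence gives $\Ccal_\infty=\Ccal_{\Fcal_\infty}$; the identification $\Fcal_\infty^{\ast\ast}\simeq\Gr(\Ecal_0)^{\ast\ast}$ together with agreement of Chern characters identifies $\Ccal_{\Fcal_\infty}$ with the codimension-$2$ cycle associated to $\Gr(\Ecal_0)^{\ast\ast}/\Gr(\Ecal_0)$ in the sense of Section \ref{sec:cycle}. Because $\Gr(\Ecal_0)^{\ast\ast}$ is unique up to isomorphism by Jordan-H\"older and carries a unique admissible HYM connection up to unitary gauge by Bando-Siu, the limiting ideal connection is independent of the chosen subsequence, and the full flow converges as $t\to\infty$ in the ideal-connection topology of Theorem \ref{thm:ideal-convergence}.

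The hardest point is clearly the second step: showing that destabilizing subsheaves survive the nonlinear parabolic evolution and match the Uhlenbeck limit in precisely the polystable pattern dictated by the Seshadri graded. In complex dimension $\geq 2$ the projections onto the successive $\Fcal_i$ are generally not smooth in the limit, and the codimension-$2$ bubbling locus coincides with the non-locally-free locus of $\Gr(\Ecal_0)$; controlling both phenomena simultaneously is the core technical content of \cite{Sibley:15,SibleyWentworth:15}, and is what underwrites the global statement asserted here.
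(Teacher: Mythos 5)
This theorem is not proved in the paper itself: it is quoted from \cite{DaskalWentworth:04,DaskalWentworth:07a,Sibley:15,SibleyWentworth:15}, and your outline correctly mirrors the overall strategy of those works (Donaldson's maximum principle gives a uniform bound on $|\Lambda F_{A_t}|$; Theorem \ref{thm:uhlenbeck} plus Proposition \ref{prop:admissible}/Hong--Tian then produce an admissible HYM subsequential limit together with a codimension-$2$ cycle; finally one identifies the limit with $\Gr(\Ecal_0)^{\ast\ast}$). Note, however, that your second step is a pointer rather than an argument: the persistence of the Seshadri filtration in the limit and the isomorphism $\Ecal_\infty\simeq\Gr(\Ecal_0)^{\ast\ast}$ are exactly the main theorems of the cited papers, proved there by constructing weakly holomorphic projections, resolving the Harder--Narasimhan--Seshadri filtration and inducting on its length --- not via the Quot-scheme section method of Section \ref{sec:quot}. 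Deferring that step to the same citations is consistent with how the present paper treats the statement, but it means your proposal is an outline of the known proof rather than an independent one.

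There is one genuine gap in what you did write, in the cycle identification. Proposition \ref{prop:cycles} gives $\Ccal_\infty=\Ccal_{\Fcal_\infty}$ for the limiting quotient $\Fcal_\infty$, but knowing $\Fcal_\infty^{\ast\ast}\simeq\Gr(\Ecal_0)^{\ast\ast}$ together with agreement of Chern characters only yields $[\Ccal_{\Fcal_\infty}]=[\Ccal_{\Gr(\Ecal_0)}]$ in $H^4(X,\QBbb)$, and equality of cohomology classes does not determine the cycle: already on a surface, ideal sheaves of two distinct points have the same double dual and the same Chern classes but distinct associated cycles. The identification of the support \emph{and multiplicities} of the bubbling cycle with those of $\Gr(\Ecal_0)^{\ast\ast}/\Gr(\Ecal_0)$ is precisely the hard content of \cite[Thm.\ 1.1]{SibleyWentworth:15} (and of \cite{DaskalWentworth:07a} for surfaces), obtained from the slicing and multiplicity analysis along the flow, not from Chern-class bookkeeping. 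Nor can you instead pin down $\Fcal_\infty$ through the Greb--Toma moduli map of Section \ref{sec:proof}: the paper's use of that machinery presupposes Theorem \ref{thm:flow}, so invoking it here would be circular. Your final step (subsequential uniqueness plus Bando--Siu uniqueness of the admissible HYM connection implies full-time convergence) is fine once the subsequential limit has been correctly identified.
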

The theorem above states that the map $\Fscr$ in \eqref{eqn:F} is given purely in terms of the holomorphic initial data and the solution for admissible HYM connections on reflexive sheaves.

 We wish to prove that $\Fscr$ is continuous. For this we invoke the moduli space construction of Greb-Toma \cite{GrebToma:17}.
 Let $R^{\mu ss}\subset \Quot(\Hcal, \tau_E)$ denote the open subset consisting of quotients that are slope semistable torsion-free sheaves. Then there exists a (seminormal) projective variety $M^{\mu ss}$ and  a morphism (in particular, continuous map)
  $R^{\mu ss}\to M^{\mu ss} : \Fcal\mapsto [\Fcal]$ with the following properties:
\begin{enumerate}
	\item If $\Fcal_1\simeq \Fcal_2$, then $[\Fcal_1]=[\Fcal_2]$
  in $M^{\mu ss}$ (cf.\ the discussion preceding \cite[Def.\ 2.19]{GSTW:18}); 
  \item If $[\Fcal_1]=[\Fcal_2]$ in $M^{\mu ss}$, then $\Fcal_1^{\ast\ast}\simeq \Fcal_2^{\ast\ast}$ and $\Ccal_{\Fcal_1}=\Ccal_{\Fcal_2}$ \cite[Thm.\ 5.5]{GrebToma:17}.
\end{enumerate}
The association $[\Fcal]\to (\Fcal^{\ast\ast}, \Ccal_\Fcal)$ gives a well-defined map $\overline \Phi : \overline M^\mu(E,h)\to \overline M_{\HYM}(E,h)$, where
$\overline{M}^{\mu }(E,h)$ is the
closure of $M^\ast_{\HYM}(E,h)$ in  $M^{\mu ss}$.
   There is a diagram

 \begin{equation}
 \begin{split} \label{eqn:moduli}
\xymatrix{
\overline{\Acal^s}(E,h) \ar[r]^Q\ar[dr]_{\Fscr} & \overline M^\mu(E,h) \ar[d]^{\overline\Phi}\\	
&\overline M_{\HYM}(E,h)
}
 \end{split}
 \end{equation}
Here, the map $\overline{A^s}(E,h)\stackrel{Q}{\lra} \overline{M}%
^{\mu}(E,h)$ is defined by realizing a semistable bundle as a quotient in $R^{\mu ss}$ (see the discussion following Proposition \ref{prop:maruyama}), and sending this
quotient to its equivalence class in $M^{\mu ss}$. 

    By construction, $Q$ may be locally exhibited as the composition of the map $R^{\mu ss}\to M^{\mu ss}$ with a classifying map $\sigma$ as discussed in the previous section. The former map is a morphism of complex spaces and is therefore continuous. By Theorem \ref{thm:classifying}, $\sigma$ is  continuous as well. Since continuity is a local property, we deduce the continuity of $Q$.
Now one of the main results of \cite{GSTW:18} is Theorem 4.11, which states that
 $\overline \Phi$ is also continuous. 
By Theorem \ref{thm:flow}, the diagram \eqref{eqn:moduli} commutes, and we therefore conclude that $\mathscr F$ is
continuous on $\overline{A^s}(E,h)$.  

To address the general situation, we first reduce the problem as in \cite[Sec.\ 4]{DaskalWentworth:07a}. Let
 $A_i\to A_0$ be a sequence in $\Acal^{ss}(E,h)$ converging in the $C^\infty$ topology, and let 
$[A_\infty, \Ccal_\infty^A, S(A_\infty)]=\Fscr(A_0)$. By the compactness theorem \cite[Thm.\ 3.23]{GSTW:18}, we may assume that, after passing to a subsequence, there is an ideal connection  such that  $\Fscr(A_i)\to [(B_\infty,  \Ccal_\infty^B, S(B_\infty)]$. We must show that the two limits agree.

Let $A_{i,t}$ denote the Yang-Mills flow at time $t$ of $A_i$. Smooth dependence on initial conditions implies that for each fixed $T>0$, $A_{i,t}\to A_t$ smoothly as $i\to +\infty$, uniformly for $t\in [0,T)$.
\begin{lemma} \label{lem:A-subsequence}
 There is a subsequence $($also denoted {$\{i\}$}$)$ and 
$t_i\to +\infty$, such that $A_{i,t_i}\to [A_\infty, \Ccal_\infty^A, S(A_\infty)]$ in the sense of Theorem \ref{thm:ideal-convergence}. 
\end{lemma}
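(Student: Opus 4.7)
The plan is a standard diagonalization that combines two ingredients. First, Theorem~\ref{thm:flow} applied to $A_0$ gives that its Yang--Mills flow $A_t$ converges in the sense of Theorem~\ref{thm:ideal-convergence} to $\Fscr(A_0)=[A_\infty,\Ccal_\infty^A,S(A_\infty)]$ as $t\to+\infty$; in particular, since the cycles along the smooth flow are identically zero, $A_t\to A_\infty$ in $C^\infty_{\mathrm{loc}}$ on $X_\ast:=X\setminus(|\Ccal_\infty^A|\cup S(A_\infty))$, and $\ch_2(A_t)\to \ch_2(A_\infty)-\Ccal_\infty^A$ in mass norm. Second, the smooth dependence statement just before the lemma gives that for each fixed $T>0$, $A_{i,t}\to A_t$ in $C^\infty(X)$ uniformly for $t\in[0,T]$, modulo unitary gauge. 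My task is to diagonalize these two modes of convergence.

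First I would fix a compact exhaustion $K_1\subset K_2\subset\cdots$ of $X_\ast$ and, invoking the first ingredient, pick an increasing sequence $T_j\to+\infty$ with $\|A_{T_j}-A_\infty\|_{C^j(K_j)}<1/j$ and $\|\ch_2(A_{T_j})-\ch_2(A_\infty)+\Ccal_\infty^A\|_{\mathrm{mass}}<1/j$. Next, applying smooth dependence at time $T_j$, I would pick a strictly increasing sequence $I_j$ so that $\|A_{i,T_j}-A_{T_j}\|_{C^j(X)}<1/j$ for all $i\geq I_j$. The candidate times are then $t_i:=T_{j(i)}$ for $i\geq I_1$, where $j(i):=\max\{j:I_j\leq i\}$; by construction $j(i)\to+\infty$ and hence $t_i\to+\infty$.

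The triangle inequality then delivers all three pieces of Theorem~\ref{thm:ideal-convergence}. On any compact $K\subset X_\ast$, eventually $K\subset K_{j(i)}$, and $\|A_{i,t_i}-A_\infty\|_{C^k(K)}\leq 2/j(i)$ for all $k\leq j(i)$, giving $C^\infty_{\mathrm{loc}}$ convergence on $X_\ast$. The zero cycles $\Ccal_i\equiv 0$ converge trivially to $0$, which is a subcycle of $\Ccal_\infty^A$. Finally, mass-norm convergence of $\ch_2(A_{i,t_i})$ to $\ch_2(A_\infty)-\Ccal_\infty^A$ follows from the same triangle inequality together with the elementary fact that $C^2$-closeness of two smooth connections with uniformly bounded $L^2$-energy (which holds along the Yang--Mills flow by energy monotonicity) gives mass-norm closeness of their Chern characters.

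The hard part is essentially bookkeeping: three inequivalent notions of convergence (local smooth, subcycle, mass norm of Chern characters) must be controlled simultaneously by a single choice of $\{t_i\}$. The nontrivial analytic input, namely smooth dependence of the Yang--Mills flow on initial data over finite time intervals, is used as a black box already supplied in the hypotheses of the lemma, so no new PDE estimates are required.
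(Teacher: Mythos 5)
Your argument is correct in outline, but it takes a genuinely different route from the paper's proof. The paper also starts with a diagonalization, but only to obtain \emph{weak} $L^p_{1,loc}$ convergence of $A_{i,t_i}$ to $A_\infty$ away from $|\Ccal_\infty^A|\cup S(A_\infty)$; it then invokes the estimates of Hong--Tian (Theorem 8 and Proposition 9 of \cite{HongTian:04}, which hold uniformly for a smoothly convergent family of initial conditions, together with the uniform Hermitian--Einstein bound) to extract a subsequence converging in the strong, ideal-connection sense to \emph{some} ideal HYM connection, and finally identifies that limit with $[A_\infty,\Ccal_\infty^A,S(A_\infty)]$ via the argument of \cite[Prop.\ 3.20]{GSTW:18}. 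You instead bypass the compactness-plus-identification step entirely: by using the full strength of Theorem \ref{thm:flow} (in particular the mass-norm convergence $\ch_2(A_t)\to\ch_2(A_\infty)-\Ccal_\infty^A$ along the single flow) together with finite-time smooth dependence, your diagonal choice of $t_i=T_{j(i)}$ verifies all three clauses of Theorem \ref{thm:ideal-convergence} directly by the triangle inequality, with the prescribed limit built in from the start. What each buys: your route is shorter and needs no new compactness input, but it leans entirely on the strong form of the single-flow convergence (mass-norm convergence of the currents, not merely weak convergence), whereas the paper's route needs only the weak information from the diagonalization and reuses machinery (Hong--Tian, \cite[Prop.\ 3.20]{GSTW:18}) already required elsewhere in the argument. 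One point you should make explicit: the convergence $A_{T_j}\to A_\infty$ holds only up to unitary gauge and local isometries of $E$ with the limiting bundle over $K_j$ (and similarly your energy-monotonicity remark is not actually needed, since for each fixed $j$ the smooth convergence $A_{i,T_j}\to A_{T_j}$ on the compact $X$ already gives mass convergence of $\ch_2$); so you must fix the gauge/isometry for each $j$ \emph{before} choosing $I_j$, so that the conjugation constants at step $j$ are absorbed into the choice of $I_j$. With that bookkeeping added, your proof is complete.
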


\begin{proof}
The proof relies on several properties.  First,  since $A_{i,t}\to A_t$ for every $t\geq 0$, by a diagonalization argument we may choose a  sequence $A_{i,t_i}$ so that (up to gauge), $A_{i,t_i}\to A_\infty$ weakly in $L^p_{1,loc}$ away from $|\Ccal|\cup S(A_\infty)$. 
Next, by the result in \cite{HongTian:04}, any sequence $A_{i,t_i}$, $t_i\to +\infty$, has a subsequence that converges to an ideal connection. This is shown in \cite{HongTian:04} for a sequence of times along a single flow, but the argument extends more generally. The key points are  Theorem 8 and Proposition 9 of \cite{HongTian:04}, and these hold uniformly for a smoothly convergent  sequence of initial conditions. Note that there is a uniform bound on the Hermitian-Einstein tensor. Given this fact, we are exactly in the set-up of the proof of \cite[Proposition 3.20]{GSTW:18}.  The conclusion of that result is that any limiting ideal HYM connection  of
$\{A_{i,t_i}\}$ must coincide with $[A_\infty, \Ccal_\infty^A, S(A_\infty)]$. 
\end{proof}

The Yang-Mills flow lies in a single complex gauge orbit, and $\Fscr(A_{i,t})=\Fscr(A_i)$. Therefore,
using  the same argument as above applied to connections along the flow, we also have the following.

\begin{lemma} \label{lem:B-subsequence}
There are complex gauge transformations $g_i$ such that if $B_i=g_i(A_{i,t_i})$, then after passing to a subsequence, 
$B_i\to [(B_\infty,  \Ccal^B_\infty, S(B_\infty)]$ in the sense of Theorem \ref{thm:ideal-convergence}.
\end{lemma}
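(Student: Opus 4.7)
My plan is to argue along the same lines as Lemma \ref{lem:A-subsequence}, now exploiting the fact that the Yang-Mills flow preserves the complex gauge orbit of its initial condition in order to realize the approximating connections in the form $B_i=g_i(A_{i,t_i})$.

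Concretely, for each fixed $i$, Theorem \ref{thm:flow} gives convergence $A_{i,s}\to \Fscr(A_i)$ as $s\to +\infty$ in the sense of Theorem \ref{thm:ideal-convergence}. Combined with the standing assumption that $\Fscr(A_i)\to [B_\infty,\Ccal^B_\infty,S(B_\infty)]$ in $\widehat M_\HYM(E,h)$, a diagonal argument---formally identical to the one underlying Lemma \ref{lem:A-subsequence} and justified by \cite[Prop.\ 3.20]{GSTW:18}---yields, after passing to a subsequence, times $s_i\to +\infty$ (which may be arranged to satisfy $s_i\geq t_i$) such that $A_{i,s_i}\to [B_\infty,\Ccal^B_\infty,S(B_\infty)]$ in the sense of Theorem \ref{thm:ideal-convergence}. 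I would then invoke the observation used already in the statement of the lemma, that the Yang-Mills flow stays in a single complex gauge orbit of $\Acal^{1,1}(E,h)$: via the equivalence with the Donaldson flow on hermitian metrics (keeping $\dbar_{A_i}$ fixed), the connections $A_{i,t_i}$ and $A_{i,s_i}$ arise as pullbacks to $(E,h)$ of the Chern connections of the fixed holomorphic bundle $(E,\dbar_{A_i})$ endowed with the evolved metrics $h_{t_i}$ and $h_{s_i}$. Consequently there exists a complex gauge transformation $g_i$ with $g_i(A_{i,t_i})=A_{i,s_i}$ in $\Acal^{1,1}(E,h)$, and setting $B_i:=g_i(A_{i,t_i})=A_{i,s_i}$ yields the desired convergence.

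The main obstacle is justifying the diagonalization for the hybrid notion of convergence in $\widehat M_\HYM(E,h)$, which blends $C^\infty_{loc}$ convergence (modulo unitary gauge) on exhausting compact subsets of $X\setminus (|\Ccal|\cup S(A))$ with the mass-norm convergence of the currents $\ch_2(A)-\Ccal$ from \eqref{eqn:currents-converge}. Just as in the proof of Lemma \ref{lem:A-subsequence}, this is handled by observing that both ingredients admit a countable local base: the smooth topology on a countable exhaustion by compact sets is metrizable, and mass-norm convergence of currents is itself metrizable. Once the diagonal selection has been made, the rest of the argument reduces to the complex-gauge-orbit bookkeeping described above.
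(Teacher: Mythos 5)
Your argument is correct and matches the paper's intended (essentially one-line) proof: diagonalize the convergences $A_{i,s}\to\Fscr(A_i)$ from Theorem \ref{thm:flow} and $\Fscr(A_i)\to[B_\infty,\Ccal^B_\infty,S(B_\infty)]$ to produce times $s_i\to+\infty$ with $A_{i,s_i}$ converging to the same ideal limit, and then use that the Yang--Mills flow stays in a single complex gauge orbit to write $A_{i,s_i}=g_i(A_{i,t_i})$. Your metrizability remark for the hybrid topology ($C^\infty_{loc}$ away from the singular set plus mass-norm convergence of the currents) is a reasonable way of making explicit the diagonal step that the paper leaves implicit in its reference to the argument of Lemma \ref{lem:A-subsequence}.
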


We now apply Propositions \ref{prop:limit_sections} and \ref{prop:cycles} to both sequences $A_{i,t_i}$ and $B_i$. One obtains quotients
 $q_i^A\to q_\infty^A: \Hcal\to \Fcal^A_\infty$ and $q_i^B\to q_\infty^B: \Hcal\to \Fcal^B_\infty$ in 
$\Quot(\Hcal, \tau_E)$. Moreover, $(\Fcal_\infty^A)^{\ast\ast}\simeq \Ecal^A_\infty$ and $(\Fcal_\infty^B)^{\ast\ast}\simeq \Ecal^B_\infty$. In particular, 
since
$\Ecal_\infty^{A,B}$ have admissible Hermitian-Einstein metrics, the 
 $\Fcal_\infty^{A,B}$ are slope polystable, and so  they lie in $R^{\mu ss}$. 
Also, $\mathcal{C}^{A}_\infty=\mathcal{C}_{\mathcal{F}_{\infty }^{A}}$ and $\mathcal{C}%
_\infty^{B}=\mathcal{C}_{\mathcal{F}_{\infty }^{B}}$.

Now the quotients $q_i^A$ and $q_i^B$ are isomorphic for each $i$, since $B_i=g_i(A_{i,t_i})$. Furthermore,  these bundles are semistable. 
Hence, by item (1) above,  $[\Fcal_i^A]=[\Fcal_i^B]$ in $M^{\mu ss}$ for every $i$.
Since their limits 
are also semistable (in fact polystable), we conclude again from item (1) above and the continuity of the projection to $M^{\mu ss}$ that $[\mathcal{F}_{\infty }^{A}]=[\mathcal{F}_{\infty }^{B}]$. It then follows from item
 (2) that $\Ecal_\infty^A\simeq \Ecal_\infty^B$, and $\Ccal_\infty^A=\mathcal{C}_{\mathcal{F}_{\infty }^{A}}=\mathcal{C}_{\mathcal{F}_{\infty }^{B}}=\Ccal_\infty^B$.
 From the discussion following Definition \ref{def:ideal-connection}, the limiting ideal HYM connections coincide.
  This completes the proof of the Main Theorem.



\vspace{0.1cm}

\bibliographystyle{amsplain}
\bibliography{papers}{}

\vspace{0.6cm}

\begin{center}
---------------------------------------
\end{center}
\vspace{0.6cm}

\end{document}